\documentclass[preprint,12pt]{elsarticle}
\usepackage{amssymb,amsmath,graphicx,amsthm}
\usepackage{commath}
\usepackage{bm}
\usepackage[dvipsnames]{xcolor}
\usepackage{subcaption}
\usepackage{placeins}
\usepackage{hyperref}
\theoremstyle{remark}
\newtheorem{remark}{Remark}

\theoremstyle{plain}
\newtheorem{theorem}{Theorem}
\newtheorem{lemma}[theorem]{Lemma}

\theoremstyle{definition}
\newtheorem{assumption}{Assumption}

\begin{document}
\begin{frontmatter}
\title{I-MCHM: Interface Multicontinuum Homogenization for Multiscale Elliptic Problems}
\author[cityu]{Wing Tat Leung\corref{cor1}}
\ead{wtleung27@cityu.edu.hk}
\author[sit]{Zhihang Xu}
\ead{xuzhihang@sit.edu.cn}
\cortext[cor1]{Corresponding author}
\address[cityu]{Department of Mathematics, City University of Hong Kong, Hong Kong}
\address[sit]{Department of Mathematics, Shanghai Institute of Technology, Shanghai 201418, China}
\begin{abstract}
We introduce an interface multicontinuum homogenization method (I-MCHM) for
high-contrast elliptic problems whose subdomains may have distinct microscopic
patterns and unequal numbers of continua.  Multicontinuum models upscale
microscopic fields to macroscopic continuum quantities; here the continua are
the high- and low-permeability regions, and the macroscopic variables are the
corresponding local averages of the fine-scale solution on observing cells of
size $H_\epsilon$.  Although the fine-scale solution is continuous across a
material interface, these macroscopic continuum fields need not coincide on the
interface.  Moreover, when adjacent subdomains retain different numbers of
continua, a one-to-one coarse transmission condition cannot be defined.
Standard subdomain-wise multicontinuum homogenization therefore determines the
bulk equations but leaves the interaction among these coarse variables
unspecified.  I-MCHM closes this gap by constructing two-sided constrained local
bases on interface neighborhoods and assembling a coarse bilinear form with
bulk cut-cell integrals and interface-segment corrections over the subdomain
adjacency graph, without prescribing a pointwise transmission law.  Numerical
experiments on straight, curved, and triple-junction geometries, including
continuum-coupling ablations and unequal continuum counts, demonstrate the
accuracy of the resulting upscaled model.
\end{abstract}
\begin{keyword}
Multiscale PDEs \sep Elliptic interface problems \sep Multicontinuum homogenization
\sep High-contrast media \sep Numerical homogenization
\MSC[2020] 65N30 \sep 65N55 \sep 35B27 \sep 35J25
\end{keyword}
\end{frontmatter}
\section{Introduction}

Multiscale elliptic models of composites and porous media often contain
fine-scale oscillations, high contrast, and abrupt material changes, making
direct fine-grid resolution expensive~\cite{hou1997multiscale,e2003heterogeneous,
efendiev2009multiscale}.  Classical and numerical homogenization replace these
details by effective equations obtained from local representative problems
~\cite{bensoussan1978asymptotic,allaire1992homogenization,
engquist2008asymptotic,abdulle2012heterogeneous}.  Complementary
discretization-based methods, including MsFEM, LOD, GMsFEM, CEM-GMsFEM, and
NLMC, construct mesh-dependent coarse spaces instead
~\cite{hou1997multiscale,henning2014localized,efendiev2013generalized,
chung2018non}.  The present work seeks a homogenized continuum description
whose effective coefficients can subsequently be used with different coarse
discretizations.

The Multicontinuum Homogenization Method
(MCHM)~\cite{efendiev2022multicontinuum,chung2024multicontinuum}
derives coupled macroscopic equations for several continuum quantities.  These
quantities may represent physical compartments, spectral components, or other
microscopic features~\cite{barenblatt1960basic,arbogast1990derivation}.  We
adopt conductivity-based continua: each unknown is the average of the
fine-scale solution over the high- or low-permeability part of an observing
cell of size $H_\epsilon$.  Constrained local bases reproduce these moments,
and their insertion into the fine variational form yields the effective
coefficients.

Material interfaces that separate distinct microstructures arise in composites,
geological layers, and fractured media~\cite{milton2022theory,bear2013dynamics,
berre2018flow}.  Related computational work includes fitted and unfitted
finite-element treatments of elliptic interface transmission
conditions~\cite{chen1998finite,leveque1994immersed}, multiscale finite-element
constructions for high-contrast interfaces that need not resolve the jump set on
the coarse mesh~\cite{chu2010multiscale,hou2018iteratively}, and spectral or
nonlocal multicontinuum enrichments for channelized high-conductivity
features~\cite{efendiev2011multiscale,chung2018non}.  Homogenization analyses
with interfacial jumps likewise lead to continuum models with transmission
corrections~\cite{bunoiu2019upscaling}.  The central difficulty for
multicontinuum averages is that continuity of the fine-scale solution
$u_\epsilon$ does not imply continuity of its continuum averages.  In
particular, it does not imply
\[
 U_j^{(k)}\big|_{\Gamma_{k\ell}}
 =
 U_{\pi(j)}^{(\ell)}\big|_{\Gamma_{k\ell}}
\]
for a prescribed pairing $\pi$; no such pairing exists when $L_k\ne L_\ell$.
Independent bulk MCHM models therefore leave the cross-interface interaction
unspecified.  We obtain that interaction from a joint two-sided local problem
rather than by postulating a coarse transmission law.

We develop I-MCHM for problems with subdomain-dependent microstructures and
continuum counts.  Bulk neighborhoods employ standard MCHM.  On each interface
neighborhood, saddle-point problems construct bases that reproduce independent
moments from both sides.  A bulk--interface decomposition interprets the
resulting localization, and the local energies assemble into a coarse bilinear
form with bulk cut-cell integrals and interface-segment corrections over the
subdomain adjacency graph.

The main contributions of this work are as follows:
\begin{enumerate}
    \item two-sided constrained bases for subdomain-wise continuum fields that
    may be discontinuous or have unequal counts across an interface;
    \item a symmetric coarse bilinear form with bulk cut-cell integrals and
    interface-segment corrections, requiring no prescribed pointwise
    transmission law;
    \item a bulk--interface basis decomposition together with discrete
    well-posedness in the reconstruction-energy norm; and
    \item numerical experiments on straight, curved, and three-subdomain
    geometries, including continuum-coupling ablations and unequal continuum
    counts.
\end{enumerate}

The remainder of the paper is organized as follows.
Section~\ref{sec:problem_setup} states the fine-scale interface problem and the
scale hierarchy.
Section~\ref{sec:review_mchm} reviews bulk MCHM in the absence of material
interfaces.
Section~\ref{sec:interface-mchm} develops I-MCHM: two-sided bases, bulk
cut-cell and interface-segment coefficients, the global weak form, and discrete
stability.
Section~\ref{sec:nume} reports numerical experiments and Section~\ref{sec:conclu} concludes.

\section{Problem setup}
\label{sec:problem_setup}
We introduce the fine-scale problem and the interface configuration.
Let $\Omega\subset\mathbb{R}^d$ ($d=2,3$) be a bounded computational domain with
a sufficiently regular boundary.  We consider the fine-scale elliptic problem
\begin{equation}
    -\nabla\cdot(\kappa_{\epsilon}(x)\nabla u_\epsilon)=f
    \quad \text{in } \Omega,
    \label{eq:fine_problem}
\end{equation}
subject to the homogeneous Dirichlet boundary condition
\begin{equation*}
    u_{\epsilon}= 0 \quad \text{on } \partial \Omega\,.
\end{equation*}
Here $u_{\epsilon}$ denotes the fine-scale solution, $f$ is a given source, and
$\kappa_{\epsilon}$ is a heterogeneous coefficient that may contain highly
oscillatory or high-contrast structures.
The corresponding weak formulation seeks $u_\epsilon\in H_0^1(\Omega)$ such that
\begin{equation}
    a_\epsilon(u_\epsilon,v) = (f,v),
    \qquad \forall v\in H_0^1(\Omega),
    \label{eq:fine_weak}
\end{equation}
where
\begin{equation}
    a_\epsilon(u,v)
    :=
    \int_\Omega \kappa_{\epsilon}(x)\nabla u\cdot\nabla v\,\dif x,
    \qquad
    (f,v):=\int_\Omega f v\,\dif x.
\end{equation}
More generally, for any measurable subdomain $D\subset\Omega$, we define the
localized bilinear form
\begin{equation}
a_{\epsilon,D}(u,v)
:=
\int_D
\kappa_\epsilon(x)\nabla u(x)\cdot\nabla v(x)\,\dif x.
\label{eq:local_inner}
\end{equation}

We upscale $u_\epsilon$ to continuum-wise averages over the high- and
low-permeability parts of observing cells.  Below, $x$ denotes the macroscopic
position and $y$ the local microscopic coordinate.

We assume that $\Omega$ is partitioned into $N_\Omega$ non-overlapping
Lipschitz subdomains $\{\Omega_k\}_{k=1}^{N_\Omega}$ such that
\[
\overline{\Omega} = \bigcup_{k=1}^{N_\Omega} \overline{\Omega}_k,
\qquad
\Omega_k\cap\Omega_\ell=\emptyset\quad(k\ne\ell).
\]
For every adjacent pair of subdomains, define
\[
\Gamma_{k\ell}:=\partial\Omega_k\cap\partial\Omega_\ell,
\qquad
\mathcal E:=\{(k,\ell):k<\ell,\ |\Gamma_{k\ell}|_{d-1}>0\},
\]
and let $\Gamma=\bigcup_{(k,\ell)\in\mathcal E}\Gamma_{k\ell}$.
Subdomain $\Omega_k$ may contain $L_k$ conductivity-based continua, with
$H_\epsilon$-cell averages $U_j^{(k)}$.  Since the fine-scale solution belongs to
$H_0^1(\Omega)$, the transmission conditions
\[
[\![u_\epsilon]\!]_{\Gamma_{k\ell}}:=u_\epsilon\big|_{\Omega_k}-u_\epsilon\big|_{\Omega_\ell}=0,
\qquad
[\![\kappa_\epsilon\nabla u_\epsilon\cdot n]\!]_{\Gamma_{k\ell}}=0
\]
hold weakly, where $n$ is a unit normal.  These conditions are not imposed
componentwise on the coarse variables; Section~\ref{sec:interface-mchm}
instead defines their interaction variationally.

\subsection{Meshes, RVEs, and oversampling}
\label{sec:mesh}
We distinguish the microscopic scale $\epsilon$, the observing-cell size
$H_\epsilon$, and the coarse scale $H$, with
$\epsilon\le H_\epsilon<H$.  Let $\mathcal T_{H_\epsilon}$ and
$\mathcal T_H$ be shape-regular, interface-resolving partitions; boundary
fragments are merged with a nearest interior cell when needed.  The numerical
experiments use nested Cartesian partitions.  Let $K_H(x)\in\mathcal T_H$
denote a coarse block centered at $x$.
When the fine-scale structure varies on a scale much smaller than $H$, the
microscopic response in $K_H(x)$ can be approximated by that in a smaller
representative volume element
$\omega(K_H(x))\subset K_H(x)$, taken as a union of one or more
$H_\epsilon$-cells of $\mathcal{T}_{H_\epsilon}^x$ that share the same continuum
pattern as $K_H(x)$ (typically a single observing cell or a small contiguous
cluster of observing cells about $x$).
No global periodicity of $\kappa_\epsilon$ is assumed: the RVE is a local
sampling region whose average energy density is taken as representative of the
coarse block,
\[
    \frac{1}{|\omega(K_H(x))|}
    \int_{\omega(K_H(x))}
    \kappa_{\epsilon}(x)\nabla u\cdot\nabla v\dif x
    \approx
    \frac{1}{|K_H(x)|}
    \int_{K_H(x)}
    \kappa_{\epsilon}(x)\nabla u\cdot\nabla v\dif x.
\]
Thus local bilinear forms over the coarse block may be approximated by scaled
integrals over the RVE.

The oversampled RVE is obtained by adjoining $s_{\rm os}$ layers of
$H_\epsilon$-cells:
\[
\omega^+(K_H(x))
:=
\operatorname{oversample}_{s_{\rm os}}\omega(K_H(x)).
\]
In the bulk and interface constructions below we write $\omega(K)$ and
$\omega^+(K)$ for a generic coarse neighborhood $K$.
Oversampling reduces artificial local-boundary effects.

\begin{assumption}[Modeling regime]
\label{ass:modeling}
The construction is used under the following hypotheses.
\begin{enumerate}
\item The scales satisfy $\epsilon\leq H_\epsilon<H$, with sufficiently many
observing cells in each local patch to identify every retained continuum.
\item The coefficient is measurable and strictly positive for each fixed
$\epsilon$.  High contrast may depend on $\epsilon$, but every active
continuum-cell intersection has positive measure.
\item The continuum averages are defined on observing cells of size
$H_\epsilon$ and are regarded as samples of macroscopic fields that vary
slowly on the coarse scale $H$; first-order local Taylor expansions of these
fields are therefore meaningful on an RVE of size comparable to $H$.
\item Each bulk RVE and each two-sided interface RVE is representative of the
coarse neighborhood to which its energy density is assigned.  Global
periodicity of $\kappa_\epsilon$ is not required.  The prescribed
oversampling depth $s_{\mathrm{os}}$ is sufficient to suppress the dominant
artificial boundary effect of the local Dirichlet condition.
\end{enumerate}
\end{assumption}
These are modeling assumptions used to define the reduced system.  Localization
of constrained energy-minimizing bases follows from the CEM and NLMC
analysis~\cite{zhao2020analysis}.  We invoke that decay mechanism below under
its standard stable-decomposition and continuum-coverage conditions.

\paragraph{Notation.}
$\omega(\nu)$ and $\omega^+(\nu)$ denote the RVE and oversampled RVE of a
bulk or interface neighborhood $\nu$.  The reconstructions are $R_K$ (bulk),
$R_E$ (interface), and $R_\nu$ generically.  Continuum fields are collected in
$\bm U_H$, while $\mathbf U_H$ denotes their discrete coefficient vector.
The bulk construction recalled in the next section is the interior building
block of I-MCHM; the interface closure is developed in
Section~\ref{sec:interface-mchm}.

\section{Review of bulk MCHM}
\label{sec:review_mchm}

In this section we review the standard multicontinuum homogenization method
(MCHM)~\cite{efendiev2022multicontinuum,chung2024multicontinuum}
in the absence of material interfaces.  The resulting bulk coefficients and
local reconstructions are reused on interior coarse blocks of the global
I-MCHM system.

\subsection{Continua and their characteristic functions}
The continuum choice is problem-dependent
~\cite{barenblatt1960basic,chung2024multicontinuum}.  Here each continuum is a
high- or low-permeability region, identified by a characteristic function, and
its macroscopic variable is the corresponding $H_\epsilon$-cell average.

In each coarse block $K\in\mathcal T_H^x$, assume that the medium contains
$L$ such continua.  We denote the characteristic function of continuum $i$ by
$\psi_i$ (with a superscript indicating the cell when needed).  In this work,
$\psi_i$ is the indicator of the high- or low-permeability region.  Let
$\mathcal I_K$ index the observing cells contained in the oversampled local
region.  For each $q\in\mathcal I_K$, we write $\psi_{j,q}(y)$ for the
characteristic function of continuum $j$ restricted to observing cell $q$.

\subsection{Bulk MCHM construction}
For an observing cell
$K_{H_\epsilon}(x)\in\mathcal T_{H_\epsilon}^x$ centered at $x$, we set
\[
U_i(x)
:=
\frac{\int_{K_{H_\epsilon}(x)}u_\epsilon(y)\psi_i^{K_{H_\epsilon}(x)}(y)\,\dif y}
{\int_{K_{H_\epsilon}(x)}\psi_i^{K_{H_\epsilon}(x)}(y)\,\dif y}.
\]
Treating these averages as samples of slowly varying macroscopic fields, we
employ the local expansion
\[
u_\epsilon(y)
\approx
\sum_{i=1}^L
\left(
U_i(x_K)\eta_i^{K}(y)
+
\sum_{m=1}^d
\partial_mU_i(x_K)\eta_i^{K,(m)}(y)
\right),
\qquad y\in\omega(K),
\]
on a coarse neighborhood $K$ with center $x_K$.  For a local domain $D$, we
write
\[
V(D):=H_0^1(D),
\qquad
a_{\epsilon,D}(w,v)
:=
\int_D\kappa_\epsilon(y)\nabla w\cdot\nabla v\,\dif y.
\]
The oversampled value basis $\eta_i^{K,+}\in V(\omega^+(K))$ minimizes
$\tfrac12a_{\epsilon,\omega^+(K)}(\eta,\eta)$ subject to the $L^2$ moment
constraints
\[
\int_{\omega^+(K)}
\eta(y)\psi_{j,q}(y)\,\dif y
=
\delta_{ij}
\int_{\omega^+(K)}
\psi_{j,q}(y)\,\dif y,
\qquad
j=1,\ldots,L,\quad q\in\mathcal I_K.
\]
Likewise, for $m=1,\ldots,d$, the first-order basis
$\eta_i^{K,+,(m)}\in V(\omega^+(K))$ minimizes the same energy subject to
\[
\int_{\omega^+(K)}
\eta(y)\psi_{j,q}(y)\,\dif y
=
\delta_{ij}
\int_{\omega^+(K)}
(y_m-x_{K,m})\psi_{j,q}(y)\,\dif y,
\]
where $x_K=(x_{K,1},\ldots,x_{K,d})$.
The basis functions used in the reconstruction are the restrictions
$\eta_i^K=\eta_i^{K,+}|_{\omega(K)}$ and
$\eta_i^{K,(m)}=\eta_i^{K,+,(m)}|_{\omega(K)}$.

The resulting bulk reconstruction is
\begin{equation}
R_K\bm U
:=
\sum_{i=1}^L
\Bigl(
U_i(x_K)\eta_i^{K}
+
\sum_{m=1}^d
\partial_m U_i(x_K)\eta_i^{K,(m)}
\Bigr)
\qquad\text{in }\omega(K).
\label{eq:bulk_reconstruction}
\end{equation}

Once the local bases have been constructed, we define the local homogenized
coefficients on $K$.
For $m,n=1,\ldots,d$ and $i,j=1,\ldots,L$, set
\begin{equation}
    \alpha_{ij,K}^{(mn)}
    :=
    \frac{1}{|\omega(K)|}
    \int_{\omega(K)}
    \kappa_\epsilon(y)
    \nabla_y\eta_i^{K,(m)}(y)
    \cdot
    \nabla_y\eta_j^{K,(n)}(y)
    \,\dif y,
    \label{eq:alpha_def}
\end{equation}
\begin{equation}
    \beta_{ij,K}^{(m)}
    :=
    \frac{1}{|\omega(K)|}
    \int_{\omega(K)}
    \kappa_\epsilon(y)
    \nabla_y\eta_i^{K,(m)}(y)
    \cdot
    \nabla_y\eta_j^K(y)
    \,\dif y,
    \label{eq:beta_def}
\end{equation}
and
\begin{equation}
    \gamma_{ij,K}
    :=
    \frac{1}{|\omega(K)|}
    \int_{\omega(K)}
    \kappa_\epsilon(y)
    \nabla_y\eta_i^K(y)
    \cdot
    \nabla_y\eta_j^K(y)
    \,\dif y.
    \label{eq:gamma_def}
\end{equation}
These coefficients are extended as piecewise constant fields on the coarse
grid:
\[
    \alpha_{ij}^{(mn)}(x)=\alpha_{ij,K}^{(mn)},\qquad
    \beta_{ij}^{(m)}(x)=\beta_{ij,K}^{(m)},\qquad
    \gamma_{ij}(x)=\gamma_{ij,K},
    \qquad x\in K.
\]
For convenience we also write
\[
    \beta_{ij}
    :=
    \bigl(\beta_{ij}^{(1)},\ldots,\beta_{ij}^{(d)}\bigr)^\top,
    \qquad
    \alpha_{ij}
    :=
    \bigl(\alpha_{ij}^{(mn)}\bigr)_{m,n=1}^d.
\]
Using the same reconstruction for trial and test fields, and scaling the RVE
energy density to the coarse block $K$, one obtains
\begin{align}
a_{\epsilon,K}(R_K\bm U,R_K\bm V)
&=
\int_K
\sum_{i=1}^L\sum_{j=1}^L
\Bigg[
    \gamma_{ij,K}U_iV_j
    +
    \sum_{m=1}^d
    \beta_{ij,K}^{(m)}\partial_m U_i\,V_j
    \notag\\
&\qquad\qquad
    +
    \sum_{n=1}^d
    \beta_{ji,K}^{(n)}U_i\,\partial_n V_j
    +
    \sum_{m=1}^d\sum_{n=1}^d
    \alpha_{ij,K}^{(mn)}
    \partial_m U_i\,\partial_n V_j
\Bigg]
\,\dif x.
\label{eq:local_bilinear_alpha_beta_gamma}
\end{align}
Summing over all coarse blocks yields
\begin{align}
a_\epsilon(u_\epsilon,v)
&\approx
\sum_{K\in\mathcal T_H}
a_{\epsilon,K}(R_K\bm U,R_K\bm V)
\notag\\
&=
\sum_{i=1}^L\sum_{j=1}^L
\int_\Omega
\Big[
    \gamma_{ij}U_iV_j
    +
    (\beta_{ij}\cdot\nabla U_i)V_j
\notag\\[-0.2em]
&\hspace{5em}
    +
    U_i(\beta_{ji}\cdot\nabla V_j)
    +
    (\alpha_{ij}\nabla U_i)\cdot\nabla V_j
\Big]
\,\dif x.
\label{eq:lhs_alpha_beta_gamma}
\end{align}

For a slowly varying source we retain the leading load coefficient
\[
    F_{j,K}^0
    :=
    \frac{1}{|\omega(K)|}
    \int_{\omega(K)}
    f(y)\eta_j^K(y)\,\dif y.
\]
The corresponding piecewise constant fields are
\[
    F_j^0(x)=F_{j,K}^0,
    \qquad x\in K,
\]
and the load is approximated by
\begin{equation}
    (f,v)
    \approx
    \sum_{j=1}^L
    \int_\Omega
        F_j^0V_j
    \,\dif x.
    \label{eq:rhs_alpha_beta_gamma}
\end{equation}

Combining \eqref{eq:lhs_alpha_beta_gamma} and
\eqref{eq:rhs_alpha_beta_gamma} yields the coarse weak formulation: find
$\bm U=(U_1,\ldots,U_L)^\top$ such that, for all
$\bm V=(V_1,\ldots,V_L)^\top$,
\begin{align}
&\sum_{i=1}^L\sum_{j=1}^L
\int_\Omega
\left[
    \gamma_{ij}U_iV_j
    +
    (\beta_{ij}\cdot\nabla U_i)V_j
\right]\,\dif x
\nonumber\\
&\quad+
\sum_{i=1}^L\sum_{j=1}^L
\int_\Omega
\left[
    U_i(\beta_{ji}\cdot\nabla V_j)
    +
    (\alpha_{ij}\nabla U_i)\cdot\nabla V_j
\right]\,\dif x
\nonumber\\
&\qquad=
\sum_{j=1}^L
\int_\Omega
    F_j^0V_j
\,\dif x.
\label{eq:coarse_weak_alpha_beta_gamma}
\end{align}
This bulk weak form supplies the interior contribution of I-MCHM.  The next
section constructs the additional interface bases and the corresponding
cut-cell and interface-segment terms that couple adjacent subdomains.

\section{Interface multicontinuum homogenization}
\label{sec:interface-mchm}

We now construct I-MCHM.  Interior coarse blocks are treated by the bulk
review in Section~\ref{sec:review_mchm}.  Across each material interface we
introduce two-sided constrained bases, extract bulk cut-cell and
interface-segment coefficients, assemble the global weak form over the
subdomain adjacency graph, and prove discrete well-posedness.
The remainder of the section is organized accordingly:
Section~\ref{sec:interface_expansion} constructs the local two-sided
reconstruction;
Section~\ref{sec:global_form} assembles the global bilinear form;
and Section~\ref{sec:discrete_stability} establishes discrete stability.

\subsection{Local reconstruction with subdomain-wise macroscopic variables}
\label{sec:interface_expansion}

We consider macroscopic continuum variables defined separately in each
subdomain.
For every $k=1,\ldots,N_\Omega$,
\[
    \bm U^{(k)}
    =
    \bigl(U_1^{(k)},\ldots,U_{L_k}^{(k)}\bigr)^\top
\]
collects the $H_\epsilon$-cell continuum averages associated with the
conductivity-based continua of $\Omega_k$.
These fields are continuous within their own subdomain, but they are not
required to match across a material interface: in general
$U_j^{(k)}\big|_{\Gamma_{k\ell}}$ need not equal any continuum field from
$\Omega_\ell$, and when $L_k\ne L_\ell$ no one-to-one pairing exists.
Accordingly, a local reconstruction near $\Gamma_{k\ell}$ must admit
independent macroscopic data from both sides.

The construction in this subsection is written for a single adjacent pair of
subdomains.  This is only a local simplification: the same two-sided
procedure applies independently to every interface edge
$(k,\ell)\in\mathcal E$, and the $N_\Omega$-subdomain model is obtained by
assembling these pairwise contributions over the adjacency graph as in
Section~\ref{sec:global_form}.

Fix an adjacent pair $(k,\ell)\in\mathcal E$, with interface
$\Gamma_{k\ell}=\partial\Omega_k\cap\partial\Omega_\ell$ as in
Section~\ref{sec:problem_setup}.  To simplify the formulas, we relabel the two
local sides as $a=1,2$; the assembly later maps these local labels back to the
global subdomain indices $k$ and $\ell$.
For this fixed pair we set $L_1=L_k$ and $L_2=L_\ell$, and identify
$\bm U^{(1)}=\bm U^{(k)}$ and $\bm U^{(2)}=\bm U^{(\ell)}$, with the same
identification for test variables.
Each continuum variable on a side is again the local average of the fine-scale
solution over a high- or low-permeability region of that subdomain on
observing cells of size $H_\epsilon$; on the interface neighborhood these
averages are treated as samples of slowly varying macroscopic fields and enter
the first-order reconstruction on $\omega(E)$.

Let $\mathcal T_H$ denote the coarse partition from Section~\ref{sec:mesh}, and write
\[
\mathcal T_H(\Gamma_{k\ell})
:=
\{K\in\mathcal T_H:\,|\overline K\cap\Gamma_{k\ell}|_{d-1}>0\}
\]
for the coarse blocks that meet the interface.
A coarse block $C\in\mathcal T_H(\Gamma_{k\ell})$ is an interface neighborhood
for the pair $(k,\ell)$ when its local sampling region contains
positive-volume portions of both subdomains.  We use
$E\in\mathcal T_H(\Gamma_{k\ell})$ as an index for such a neighborhood; thus
$E$ labels a coarse block rather than a lower-dimensional mesh edge.  The
interface RVE $\omega(E)\subset C$ is the union of the representative portions
sampled from the two sides of $\Gamma_{k\ell}$.  Its oversampled patch
$\omega^+(E)\supset\omega(E)$ is obtained by adjoining $s_{\mathrm{os}}$
layers of $H_\epsilon$-cells while retaining both sides of the interface.
Let $x_E$ be the centroid of $\omega(E)$.

Throughout this section, $x$ denotes a macroscopic position and $y$ the local
microscopic coordinate.
To avoid confusion with the fast variable $y$, the centers of the observing
cells are denoted by $z_q$, where $q$ is the observing-cell index.
Let $\mathcal I_E$ denote the observing cells contained in $\omega^+(E)$.
For a local side $a=1,2$, $j=1,\ldots,L_a$, and $q\in\mathcal I_E$, we
denote by
\[
    \psi_{j,q}^{a}(y)
\]
the characteristic function of the $j$-th continuum on that side within the
$q$-th observing cell.  Empty continuum-cell intersections are omitted.  Define
the zeroth- and first-order moments
\[
    s_{j,q}^{a}
    :=
    \int_{\omega^+(E)}
    \psi_{j,q}^{a}(y)\,\dif y,
\qquad
    r_{j,q}^{a,(m)}
    :=
    \int_{\omega^+(E)}
    (y_m-x_{E,m})\psi_{j,q}^{a}(y)\,\dif y,
\]
for $m=1,\ldots,d$.  We use the local energy space
$V(\omega^+(E)):=H_0^1(\omega^+(E))$.

\subsubsection{Two-sided constrained bases on the interface neighborhood}

For each local side $a=1,2$ and each continuum $i=1,\ldots,L_a$ of that side,
we construct constrained multiscale bases on the interface neighborhood that
reproduce the macroscopic continuum average associated with side $a$ and index
$i$.
The bases are computed with the true coefficient $\kappa_\epsilon$ on
$\omega^+(E)$ and are constrained by continuum moments from both sides of the
interface, so that independent macroscopic data from the adjacent subdomains
can enter the same local reconstruction.
Let
\[
    a_{\epsilon,E}^+(\eta,v)
    :=
    \int_{\omega^+(E)}
    \kappa_\epsilon(y)\nabla_y\eta(y)\cdot\nabla_y v(y)\,\dif y.
\]
The value basis $\eta_i^{a,E,+}\in V(\omega^+(E))$ and multipliers
$\{\mu_{ijq}^{a,b,E}\}$ are defined by the saddle-point problem
\[
\begin{aligned}
    a_{\epsilon,E}^+(\eta_i^{a,E,+},v)
    &=
    \sum_{b=1}^2\sum_{j=1}^{L_b}\sum_{q\in\mathcal I_E}
    \mu_{ijq}^{a,b,E}
    \int_{\omega^+(E)}
    \psi_{j,q}^{b}(y)v(y)\,\dif y,
    \\
    &\hspace{8em}\forall v\in V(\omega^+(E)),
\end{aligned}
\]
together with the constraints
\[
\begin{aligned}
    \int_{\omega^+(E)}
    \eta_i^{a,E,+}(y)\psi_{j,q}^{b}(y)\,\dif y
    &=\delta_{ab}\delta_{ij}s_{j,q}^{b},\\
    &\hspace{-8em}b=1,2,\quad j=1,\ldots,L_b,\quad q\in\mathcal I_E.
\end{aligned}
\]
Thus $\eta_i^{a,E,+}$ has unit average on the $i$-th continuum of side $a$ and
zero average on all other active continua of either side.

Similarly, for $m=1,\ldots,d$, the first-order basis
$\eta_i^{a,E,+,(m)}\in V(\omega^+(E))$ and multipliers
$\{\mu_{ijq}^{a,b,E,(m)}\}$ solve
\[
\begin{aligned}
    a_{\epsilon,E}^+(\eta_i^{a,E,+,(m)},v)
    &=
    \sum_{b=1}^2\sum_{j=1}^{L_b}\sum_{q\in\mathcal I_E}
    \mu_{ijq}^{a,b,E,(m)}
    \int_{\omega^+(E)}
    \psi_{j,q}^{b}(y)v(y)\,\dif y,
    \\
    &\hspace{8em}\forall v\in V(\omega^+(E)),
\end{aligned}
\]
together with
\[
\begin{aligned}
    \int_{\omega^+(E)}
    \eta_i^{a,E,+,(m)}(y)\psi_{j,q}^{b}(y)\,\dif y
    &=\delta_{ab}\delta_{ij}r_{j,q}^{b,(m)},\\
    &\hspace{-8em}b=1,2,\quad j=1,\ldots,L_b,\quad q\in\mathcal I_E.
\end{aligned}
\]

The basis functions used on the interface RVE $\omega(E)$ are the restrictions
\[
    \eta_i^{a,E}
    :=
    \eta_i^{a,E,+}|_{\omega(E)},
    \qquad
    \eta_i^{a,E,(m)}
    :=
    \eta_i^{a,E,+,(m)}|_{\omega(E)}.
\]

\subsubsection{Basis decomposition and localization}

To interpret the interface effect, we decompose each total basis into a bulk
reference response and a correction.

For each local side $a=1,2$, let $\widehat\kappa_\epsilon^{(a)}$ be a bulk
reference coefficient associated with that side on the interface neighborhood.
It extends the locally sampled pattern of side $a$ across $\omega^+(E)$ and is
used only for this interpretation; no global periodicity is required.

Define the corresponding reference bilinear form
\[
    \widehat a_{\epsilon,E}^{a,+}(\eta,v)
    :=
    \int_{\omega^+(E)}
    \widehat\kappa_\epsilon^{(a)}(y)
    \nabla_y\eta(y)\cdot\nabla_y v(y)\,\dif y.
\]
The reference bases $\widehat\eta_i^{a,E,+}$ and
$\widehat\eta_i^{a,E,+,(m)}$ solve the preceding value- and first-order
saddle-point problems with $a_{\epsilon,E}^+$ replaced by
$\widehat a_{\epsilon,E}^{a,+}$ and with identical moment constraints.
The bulk bases on $\omega(E)$ are the restrictions
\[
    \widehat\eta_i^{a,E}
    :=
    \widehat\eta_i^{a,E,+}|_{\omega(E)},
    \qquad
    \widehat\eta_i^{a,E,(m)}
    :=
    \widehat\eta_i^{a,E,+,(m)}|_{\omega(E)}.
\]

The interface-localized corrections are then
\[
    \eta_{i,\Gamma}^{a,E,+}
    :=
    \eta_i^{a,E,+}-\widehat\eta_i^{a,E,+},
\]
and
\[
    \eta_{i,\Gamma}^{a,E,+,(m)}
    :=
    \eta_i^{a,E,+,(m)}-\widehat\eta_i^{a,E,+,(m)},
\]
with restrictions
$\eta_{i,\Gamma}^{a,E}
=\eta_{i,\Gamma}^{a,E,+}|_{\omega(E)}$ and
$\eta_{i,\Gamma}^{a,E,(m)}
=\eta_{i,\Gamma}^{a,E,+,(m)}|_{\omega(E)}$.
By construction,
\[
\eta_i^{a,E}
=
\widehat\eta_i^{a,E}+\eta_{i,\Gamma}^{a,E},
\qquad
\eta_i^{a,E,(m)}
=
\widehat\eta_i^{a,E,(m)}+\eta_{i,\Gamma}^{a,E,(m)},
\]
so the full basis is the sum of a bulk part associated with the reference
medium of side $a$ and a correction that, in the sense of the decay estimate
below, concentrates near the interface.

\begin{lemma}[Zero constrained moments]
\label{lem:zero-interface-moments}
Fix a local side $a\in\{1,2\}$ and a continuum index $i\in\{1,\ldots,L_a\}$.
For every active side--continuum--cell triple $(b,j,q)$ on $\omega^+(E)$,
\[
\int_{\omega^+(E)}
\eta_{i,\Gamma}^{a,E,+}\psi_{j,q}^{b}\,\dif y=0.
\]
Likewise, for each first-order direction $m=1,\ldots,d$,
\[
\int_{\omega^+(E)}
\eta_{i,\Gamma}^{a,E,+,(m)}\psi_{j,q}^{b}\,\dif y=0.
\]
\end{lemma}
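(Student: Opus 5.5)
The argument is a direct consequence of linearity of the moment functionals and of the fact that the total basis and the reference basis are defined by saddle-point problems with \emph{identical} constraint right-hand sides. I would not use any energy or decay estimate; only the constraint equations enter.

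First, fix $a$, $i$ and an active triple $(b,j,q)$, and record the constraint satisfied by the total value basis:
\[
\int_{\omega^+(E)}\eta_i^{a,E,+}\psi_{j,q}^{b}\,\dif y=\delta_{ab}\delta_{ij}s_{j,q}^{b}.
\]
Next, note that the reference basis $\widehat\eta_i^{a,E,+}$ is, by definition, the solution of the same saddle-point system with $a_{\epsilon,E}^+$ replaced by $\widehat a_{\epsilon,E}^{a,+}$. The moment constraints are unchanged: the same functions $\psi_{j,q}^{b}$ on both sides and the same data $s_{j,q}^{b}$. Hence
\[
\int_{\omega^+(E)}\widehat\eta_i^{a,E,+}\psi_{j,q}^{b}\,\dif y=\delta_{ab}\delta_{ij}s_{j,q}^{b}.
\]
Subtracting the two identities and using linearity of $v\mapsto\int_{\omega^+(E)}v\,\psi_{j,q}^{b}\,\dif y$ on $V(\omega^+(E))$ gives the first claim.

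The first-order statement follows verbatim. Replace $s_{j,q}^{b}$ by $r_{j,q}^{b,(m)}$ and use the first-order constraints for $\eta_i^{a,E,+,(m)}$ and $\widehat\eta_i^{a,E,+,(m)}$.

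The only point needing care is that both bases actually exist, so that the constraints hold. The total basis is assumed well defined in the construction. For the reference problem I would argue as follows.
\begin{itemize}
\item $\widehat a_{\epsilon,E}^{a,+}$ is coercive on $H_0^1(\omega^+(E))$, since $\widehat\kappa_\epsilon^{(a)}$ is measurable and strictly positive by Assumption~\ref{ass:modeling}, together with the Poincar\'e inequality.
\item The constraint functionals $v\mapsto\int v\,\psi_{j,q}^{b}$ are linearly independent on $H_0^1(\omega^+(E))$. Active continuum--cell intersections have positive measure and are disjoint for distinct $(b,j,q)$, so the $\psi_{j,q}^{b}$ are linearly independent in $L^2$. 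Since $H_0^1$ is dense in $L^2$, the functionals remain independent on $H_0^1$, which gives the discrete inf-sup condition.
\end{itemize}
Moreover, the active triples for the reference problem must be the same set indexed by both sides' $\psi^b_{j,q}$, exactly as for the total basis. This is what the definition "with identical moment constraints" guarantees.
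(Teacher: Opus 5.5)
Your argument is correct and is the paper's proof: the two-sided bases and their bulk-reference counterparts satisfy the same moment constraints with the same right-hand sides $\delta_{ab}\delta_{ij}s_{j,q}^{b}$ and $\delta_{ab}\delta_{ij}r_{j,q}^{b,(m)}$, so subtracting and using linearity of the moment functionals gives both identities. The paper does not include your well-posedness check for the reference saddle-point problem, since it takes existence of the bases for granted. If you keep that check, two points need tightening: coercivity requires a uniform positive lower bound on $\widehat\kappa_\epsilon^{(a)}$, not just positivity almost everywhere, and Assumption~\ref{ass:modeling} is stated for $\kappa_\epsilon$, not for the reference coefficient.
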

\begin{proof}
By construction, the two-sided continuum bases
$\eta_i^{a,E,+}$, $\eta_i^{a,E,+,(m)}$ and their bulk counterparts
$\widehat\eta_i^{a,E,+}$, $\widehat\eta_i^{a,E,+,(m)}$ are constrained by
the same family of moment functionals
$\{\mathcal M_{b,j,q}\}$ with identical right-hand sides
$\delta_{ab}\delta_{ij}s_{j,q}^{b}$ and
$\delta_{ab}\delta_{ij}r_{j,q}^{b,(m)}$, respectively.  Subtracting the
corresponding constraint equations therefore yields both identities for the
corrections
$\eta_{i,\Gamma}^{a,E,+}=\eta_i^{a,E,+}-\widehat\eta_i^{a,E,+}$ and
$\eta_{i,\Gamma}^{a,E,+,(m)}=\eta_i^{a,E,+,(m)}-\widehat\eta_i^{a,E,+,(m)}$.
\end{proof}
Thus the interface correction changes the microscopic energy profile of the
basis without altering any continuum moment imposed on \(\omega^+(E)\).
In particular, \(\eta_{i,\Gamma}^{a,E,+},\eta_{i,\Gamma}^{a,E,+,(m)}\) lie in
the kernel of the constraint map and cannot be detected by the macroscopic
continuum averages alone.

This zero-moment structure also explains why the correction localizes near the
interface, independently of whether the side-\(a\) pattern extension
\(\widehat\kappa_\epsilon^{(a)}\) coincides with \(\kappa_\epsilon\) on the
opposite side.  Let \(\mathcal M_E\) collect the active constraint
functionals
\[
 (\mathcal M_Ev)_{b,j,q}
 :=
 \int_{\omega^+(E)}v\,\psi_{j,q}^{b}\,\dif y .
\]
Lemma~\ref{lem:zero-interface-moments} places the corrections in
\(\ker\mathcal M_E\).  More precisely, for a basis associated with side \(a\)
and continuum \(i\):
\begin{itemize}
\item
on the opposite side \(b\neq a\), both \(\eta_i^{a,E,+}\) and
\(\widehat\eta_i^{a,E,+}\) have zero average on every continuum of that side
(and likewise for the first-order bases).  Each field therefore decays into
the opposite subdomain by the standard NLMC localization for zero-moment
continua~\cite{zhao2020analysis}, so their difference is small away from the
interface regardless of the pattern mismatch between \(\kappa_\epsilon\) and
\(\widehat\kappa_\epsilon^{(a)}\) there;
\item
on the matching side \(a\), the difference itself has zero average on every
continuum of that side, and the same NLMC cutoff argument yields decay away
from the interface into side \(a\).
\end{itemize}
Consequently, under the usual bounded stable-decomposition hypotheses for the
constrained spaces, there exist constants \(C_{\rm loc}>0\) and \(0<\rho<1\),
independent of the oversampling depth \(s_{\mathrm{os}}\), such that with
\[
\mathcal N_{s_{\mathrm{os}}}(\Gamma_{k\ell})
:=
\{y\in\omega^+(E):
\operatorname{dist}(y,\Gamma_{k\ell})<s_{\mathrm{os}}H_\epsilon\}
\]
one has
\begin{equation}
\begin{aligned}
&\int_{\omega^+(E)\setminus\mathcal N_{s_{\mathrm{os}}}(\Gamma_{k\ell})}
\kappa_\epsilon
\left(
|\nabla\eta_{i,\Gamma}^{a,E,+}|^2
+|\nabla\eta_{i,\Gamma}^{a,E,+,(m)}|^2
\right)\,\dif y
\\
&\hspace{5em}\leq
C_{\rm loc}\rho^{s_{\mathrm{os}}}
\int_{\omega^+(E)}
\kappa_\epsilon
\left(
|\nabla\eta_{i,\Gamma}^{a,E,+}|^2
+|\nabla\eta_{i,\Gamma}^{a,E,+,(m)}|^2
\right)\,\dif y.
\end{aligned}
\label{eq:interface_localization_decay}
\end{equation}
Estimate
\eqref{eq:interface_localization_decay} quantifies that the interface
correction concentrates near \(\Gamma_{k\ell}\).  We invoke the established
NLMC cutoff mechanism under the stable-decomposition hypotheses above and do
not reproduce the corresponding argument here.

For convenience, define the total local basis functions
\[
    \theta_i^{a,E}
    :=
    \widehat\eta_i^{a,E}+\eta_{i,\Gamma}^{a,E}
    =
    \eta_i^{a,E},
\]
and
\[
    \theta_i^{a,E,(m)}
    :=
    \widehat\eta_i^{a,E,(m)}
    +
    \eta_{i,\Gamma}^{a,E,(m)}
    =
    \eta_i^{a,E,(m)}.
\]
Thus each basis may be used directly or written as the sum of a bulk part and
an interface-localized correction.
In a local patch lying entirely inside one subdomain, the true coefficient and
the bulk reference coincide, so the correction vanishes and one recovers the
bulk basis of that subdomain.  More generally, under
\eqref{eq:interface_localization_decay}, the correction is negligible away from
the interface in the local energy norm.

\subsubsection{Local expansions and energy decomposition}

Using independent macroscopic data from both sides and the same local space
for trial and test fields, define
\begin{equation}
\begin{aligned}
R_E\bm U
&:=
\sum_{a=1}^2
\sum_{i=1}^{L_a}
\Bigl(
U_i^{(a)}(x_E)\theta_i^{a,E}
+
\sum_{m=1}^d
\partial_m U_i^{(a)}(x_E)\theta_i^{a,E,(m)}
\Bigr),
\\
\widehat R_E^{(a)}\bm U
&:=
\sum_{i=1}^{L_a}
\Bigl(
U_i^{(a)}(x_E)\widehat\eta_i^{a,E}
+
\sum_{m=1}^d
\partial_m U_i^{(a)}(x_E)\widehat\eta_i^{a,E,(m)}
\Bigr),
\\
\widehat R_E\bm U
&:=
\sum_{a=1}^2\widehat R_E^{(a)}\bm U,
\\
R_{\Gamma,E}\bm U
&:=
R_E\bm U-\widehat R_E\bm U,
\end{aligned}
\label{eq:interface_reconstruction}
\end{equation}
so that
\[
R_E\bm U
=
\widehat R_E\bm U+R_{\Gamma,E}\bm U,
\]
where $x_E$ is the centroid of $\omega(E)$.
Let $C_E\in\mathcal T_H$ be the physical coarse block associated with $E$,
with representative sampling region $\omega(E)\subset C_E$, and write
\[
\Gamma_E
:=
\overline{C_E}\cap\Gamma_{k\ell}
\]
for the portion of the fixed-pair interface $\Gamma_{k\ell}$ lying in $C_E$.
(Under the local side labels $a=1,2$, this is simply the interface segment
between the two sides inside $C_E$.)  The complete local reconstruction energy
on $C_E$ admits the exact algebraic decomposition
\begin{equation}
\begin{aligned}
a_{\epsilon,C_E}\bigl(R_E\bm U,R_E\bm V\bigr)
&=
\sum_{a=1}^2
a_{\epsilon,C_E\cap\Omega_a}
\bigl(\widehat R_E^{(a)}\bm U,\widehat R_E^{(a)}\bm V\bigr)
\\
&\qquad
+
\mathcal P_{C_E}^\Gamma(\bm U,\bm V),
\end{aligned}
\label{eq:energy_reference_correction}
\end{equation}
where the interface correction
\begin{equation}
\begin{aligned}
\mathcal P_{C_E}^\Gamma(\bm U,\bm V)
:={}&
a_{\epsilon,C_E}\bigl(R_E\bm U,R_E\bm V\bigr)
\\
&-
\sum_{a=1}^2
a_{\epsilon,C_E\cap\Omega_a}
\bigl(\widehat R_E^{(a)}\bm U,\widehat R_E^{(a)}\bm V\bigr)
\end{aligned}
\label{eq:interface_correction_form}
\end{equation}
collects every contribution that is not accounted for by the single-sided bulk
extensions on the cut cells $C_E\cap\Omega_a$.  Cross terms between distinct
sides vanish on each cut cell because the bulk reference reconstructions of
opposite sides are associated with disjoint media.  Under
\eqref{eq:interface_localization_decay}, Cauchy--Schwarz bounds
$\mathcal P_{C_E}^\Gamma$ by the decaying correction energy, so the correction
is an interface-localized modification of the bulk cut-cell energy rather than
a separate positive jump penalty.  The decomposition is an exact algebraic
interpretation of the two-sided reconstruction energy rather than an
additional algorithmic step.

\subsubsection{Bulk cut-cell and interface coefficients}

On each cut cell $C_E\cap\Omega_a$, the single-sided bulk reconstruction
$\widehat R_E^{(a)}$ yields bulk MCHM densities exactly as in
Section~\ref{sec:review_mchm}.  For $i,j=1,\ldots,L_a$ and $m,n=1,\ldots,d$,
denote the corresponding piecewise-constant coefficients by
$\alpha_{ij}^{(mn),a}$, $\beta_{ij}^{(m),a}$, and $\gamma_{ij}^{a}$, so that
\begin{equation}
\begin{aligned}
&a_{\epsilon,C_E\cap\Omega_a}
\bigl(\widehat R_E^{(a)}\bm U,\widehat R_E^{(a)}\bm V\bigr)
\\
&\quad=
\int_{C_E\cap\Omega_a}
\sum_{i,j=1}^{L_a}
\Big[
\gamma_{ij}^{a}U_i^{(a)}V_j^{(a)}
+
\bigl(\beta_{ij}^{a}\cdot\nabla U_i^{(a)}\bigr)V_j^{(a)}
\\
&\qquad\qquad
+
U_i^{(a)}\bigl(\beta_{ji}^{a}\cdot\nabla V_j^{(a)}\bigr)
+
\bigl(\alpha_{ij}^{a}\nabla U_i^{(a)}\bigr)\cdot\nabla V_j^{(a)}
\Big]
\,\dif x.
\end{aligned}
\label{eq:bulk_cutcell_form}
\end{equation}

The interface coefficients are defined so that the energy difference
\eqref{eq:interface_correction_form} is represented as a surface integral on
$\Gamma_E$.  For affine macroscopic data on $C_E$, there exist matrices
$\alpha_{ij}^{\Gamma,ab}=(\alpha_{ij}^{(mn),\Gamma,ab})_{m,n}$, vectors
$\beta_{ij}^{\Gamma,ab}$, and scalars $\gamma_{ij}^{\Gamma,ab}$
($a,b=1,2$, $i=1,\ldots,L_a$, $j=1,\ldots,L_b$) such that
\begin{equation}
\begin{aligned}
\mathcal P_{C_E}^\Gamma(\bm U,\bm V)
&=
\int_{\Gamma_E}
\sum_{a,b=1}^2
\sum_{i=1}^{L_a}
\sum_{j=1}^{L_b}
\Big[
\gamma_{ij}^{\Gamma,ab}
U_i^{(a)}V_j^{(b)}
+
\bigl(\beta_{ij}^{\Gamma,ab}\cdot\nabla U_i^{(a)}\bigr)
V_j^{(b)}
\\
&\qquad\qquad
+
U_i^{(a)}
\bigl(\beta_{ji}^{\Gamma,ba}\cdot\nabla V_j^{(b)}\bigr)
+
\bigl(\alpha_{ij}^{\Gamma,ab}\nabla U_i^{(a)}\bigr)
\cdot
\nabla V_j^{(b)}
\Big]
\,\dif S.
\end{aligned}
\label{eq:interface_line_form}
\end{equation}
Thus the interface coefficients are $(d-1)$-dimensional densities associated
with the energy difference between the two-sided reconstruction and the
single-sided bulk extensions; they are not the volume densities of the
complete local energy on $\omega(E)$.  Opposite-side blocks $a\neq b$ furnish
the effective continuum coupling across $\Gamma_{k\ell}$ and remain well
defined when $L_1\neq L_2$.
Combining \eqref{eq:energy_reference_correction}--\eqref{eq:interface_line_form}
gives the local identity
\begin{equation}
\begin{aligned}
&a_{\epsilon,C_E}\bigl(R_E\bm U,R_E\bm V\bigr)
\\
&\quad=
\sum_{a=1}^2
a_{\epsilon,C_E\cap\Omega_a}
\bigl(\widehat R_E^{(a)}\bm U,\widehat R_E^{(a)}\bm V\bigr)
\\
&\qquad+
\int_{\Gamma_E}
\sum_{a,b=1}^2
\sum_{i=1}^{L_a}
\sum_{j=1}^{L_b}
\Big[
\gamma_{ij}^{\Gamma,ab}
U_i^{(a)}V_j^{(b)}
+
\bigl(\beta_{ij}^{\Gamma,ab}\cdot\nabla U_i^{(a)}\bigr)
V_j^{(b)}
\\
&\qquad\qquad\qquad
+
U_i^{(a)}
\bigl(\beta_{ji}^{\Gamma,ba}\cdot\nabla V_j^{(b)}\bigr)
+
\bigl(\alpha_{ij}^{\Gamma,ab}\nabla U_i^{(a)}\bigr)
\cdot
\nabla V_j^{(b)}
\Big]
\,\dif S.
\end{aligned}
\label{eq:local_cutcell_gamma_identity}
\end{equation}

\subsection{Global bulk--interface weak formulation}
\label{sec:global_form}

We now assemble the local models without introducing a separate pointwise
transmission law for the continuum variables.  Recall that
$\mathcal T_H(\Gamma_{k\ell})$ denotes the coarse blocks that meet
$\Gamma_{k\ell}$.
On a coarse block $K$ contained in the interior of $\Omega_k$, the bulk
coefficients of Section~\ref{sec:review_mchm} define the contribution
\begin{equation}
\begin{aligned}
a_H^{k,K}(\bm U^{(k)},\bm V^{(k)})
&=
\int_{K}
\sum_{i,j=1}^{L_k}
\Big[
\gamma_{ij}^{k}U_i^{(k)}V_j^{(k)}
+
\bigl(\beta_{ij}^{k}\cdot\nabla U_i^{(k)}\bigr)V_j^{(k)}
\\
&\qquad\qquad
+
U_i^{(k)}\bigl(\beta_{ji}^{k}\cdot\nabla V_j^{(k)}\bigr)
+
\bigl(\alpha_{ij}^{k}\nabla U_i^{(k)}\bigr)\cdot\nabla V_j^{(k)}
\Big]
\,\dif x.
\end{aligned}
\label{eq:bulk_block_form}
\end{equation}
On a block $K\in\mathcal T_H(\Gamma_{k\ell})$, map the local sides $a=1,2$ to
the global pair $(k,\ell)$ and insert the bulk cut-cell densities of
\eqref{eq:bulk_cutcell_form} together with the interface line densities of
\eqref{eq:interface_line_form}.  The resulting local contribution is
\begin{equation}
\begin{aligned}
&a_H^{k\ell,K}(\bm U,\bm V)
\\
&\quad=
\sum_{a\in\{k,\ell\}}
\int_{K\cap\Omega_a}
\sum_{i,j=1}^{L_a}
\Big[
\gamma_{ij}^{a}U_i^{(a)}V_j^{(a)}
+
\bigl(\beta_{ij}^{a}\cdot\nabla U_i^{(a)}\bigr)V_j^{(a)}
\\
&\qquad\qquad\qquad
+
U_i^{(a)}\bigl(\beta_{ji}^{a}\cdot\nabla V_j^{(a)}\bigr)
+
\bigl(\alpha_{ij}^{a}\nabla U_i^{(a)}\bigr)\cdot\nabla V_j^{(a)}
\Big]
\,\dif x
\\
&\qquad+
\int_{K\cap\Gamma_{k\ell}}
\sum_{a,b\in\{k,\ell\}}
\sum_{i=1}^{L_a}
\sum_{j=1}^{L_b}
\Big[
\gamma_{ij}^{\Gamma,ab}
U_i^{(a)}V_j^{(b)}
+
\bigl(\beta_{ij}^{\Gamma,ab}\cdot\nabla U_i^{(a)}\bigr)
V_j^{(b)}
\\
&\qquad\qquad\qquad
+
U_i^{(a)}
\bigl(\beta_{ji}^{\Gamma,ba}\cdot\nabla V_j^{(b)}\bigr)
+
\bigl(\alpha_{ij}^{\Gamma,ab}\nabla U_i^{(a)}\bigr)
\cdot
\nabla V_j^{(b)}
\Big]
\,\dif S.
\end{aligned}
\label{eq:iface_block_form}
\end{equation}
The global bilinear form is therefore
\begin{equation}
\begin{aligned}
a_H(\bm U,\bm V)
:={}&
\sum_{k=1}^{N_\Omega}
\sum_{\substack{K\in\mathcal T_H\\ K\subset\Omega_k}}
a_H^{k,K}(\bm U^{(k)},\bm V^{(k)})
\\
&+
\sum_{(k,\ell)\in\mathcal E}
\sum_{K\in\mathcal T_H(\Gamma_{k\ell})}
a_H^{k\ell,K}\!\left(
(\bm U^{(k)},\bm U^{(\ell)}),
(\bm V^{(k)},\bm V^{(\ell)})
\right).
\label{eq:global_pairwise_bilinear}
\end{aligned}
\end{equation}
Interface blocks replace, rather than duplicate, the corresponding interior
bulk contributions: each coarse block is assembled once, either as a purely
bulk element or as a cut-cell element with an interface segment.
Near a junction, blocks belonging to different adjacent pairs may meet; the
model superposes the pairwise interface integrals on the corresponding
segments rather than introducing a separate codimension-two contribution.
Because the same local reconstruction is used for trial and test fields,
\eqref{eq:global_pairwise_bilinear} inherits symmetry from the fine-scale
energy.  Under \eqref{eq:interface_localization_decay}, as a neighborhood moves
into the interior of a subdomain the correction vanishes and
\eqref{eq:iface_block_form} reduces to the standard bulk MCHM contribution.

By the local identity \eqref{eq:local_cutcell_gamma_identity},
\begin{equation}
a_H(\bm U,\bm V)
=
\sum_{K\in\mathcal T_H}
a_{\epsilon,K}\bigl(R_K\bm U,R_K\bm V\bigr),
\label{eq:AH_full_energy_equivalence}
\end{equation}
where $R_K$ denotes the bulk reconstruction of Section~\ref{sec:review_mchm}
on interior blocks and the two-sided reconstruction $R_E$ on interface blocks.
Thus the cut-cell plus interface-segment form
\eqref{eq:global_pairwise_bilinear} coincides with the reconstruction-energy
representation used in the stability argument below.

The coarse load uses the same reconstructions.  With interior contributions
$F_K^k$ as in Section~\ref{sec:review_mchm} and interface contributions
\[
F_K^{k\ell}(\bm V^{(k)},\bm V^{(\ell)})
:=
\int_K f(y)\,R_E\bm V(y)\,\dif y,
\]
we set
\[
F_H(\bm V)
:=
\sum_{k=1}^{N_\Omega}
\sum_{\substack{K\in\mathcal T_H\\ K\subset\Omega_k}}
F_K^k(\bm V^{(k)})
+
\sum_{(k,\ell)\in\mathcal E}
\sum_{K\in\mathcal T_H(\Gamma_{k\ell})}
F_K^{k\ell}(\bm V^{(k)},\bm V^{(\ell)}).
\]
Let $V_H^k$ be the space of continuous piecewise-linear finite-element
functions on the restriction of $\mathcal T_H$ to $\Omega_k$, with zero trace
on $\partial\Omega\cap\partial\Omega_k$, and define
\[
\mathcal X_H:=\prod_{k=1}^{N_\Omega}[V_H^k]^{L_k}.
\]
The interface MCHM problem is to find $\bm U_H\in\mathcal X_H$ such that
\begin{equation}
a_H(\bm U_H,\bm V_H)=F_H(\bm V_H)
\qquad\forall\bm V_H\in\mathcal X_H.
\label{eq:global_pairwise_weak}
\end{equation}
Equation~\eqref{eq:global_pairwise_weak} is the primary coarse model used in
this work.  We write $A_H:=a_H$ for the associated bilinear form on
$\mathcal X_H$, and $\mathbf U_H$ for the coefficient vector of $\bm U_H$
after elimination of Dirichlet degrees of freedom.

For several subdomains the second sum in \eqref{eq:global_pairwise_bilinear}
runs over the adjacency graph $\mathcal E$.  Each edge corresponds to a
two-sided interface problem, while edges incident on a common subdomain share
that subdomain's continuum variables.  In the three-subdomain experiment of
Section~\ref{sec:nume}, one interface-segment contribution is associated with
each of the three interface branches.  At a triple junction the junction point has
codimension two, so no additional point contribution is introduced.
Deriving an independent junction law would require a separate local asymptotic
analysis and is outside the scope of this paper.

\subsection{Discrete stability}
\label{sec:discrete_stability}

We next establish well-posedness of the assembled coarse problem.  This is a
stability result for the finite-dimensional I-MCHM operator; it does not
require Galerkin orthogonality with the fine-scale problem.

By \eqref{eq:AH_full_energy_equivalence},
\begin{equation}
 A_H(\bm U,\bm V)
 =
 \sum_{K\in\mathcal T_H}
 a_{\epsilon,K}
       (R_K\bm U,R_K\bm V),
\label{eq:AH_local_energy_representation}
\end{equation}
where $R_K$ is the bulk or two-sided interface reconstruction on $K$.
On interface blocks, $R_K=R_E$ is the complete reconstruction.  The
interface line densities in \eqref{eq:interface_line_form} need not define a
positive form by themselves, yet they do not destroy nonnegativity of the
complete local energy in \eqref{eq:AH_local_energy_representation}.

We use the following standard discrete unisolvence condition.
\begin{assumption}[Energy unisolvence]
\label{ass:energy_unisolvence}
After removal of empty continuum constraints and elimination of Dirichlet
coarse degrees of freedom, the local reconstruction energies separate the
coarse fields:
\[
 \nabla R_K\bm V=0\ \text{a.e. in }K
 \quad\text{for every }K\in\mathcal T_H
 \qquad\Longrightarrow\qquad
 \bm V=0\ \text{in }\mathcal X_H.
\]
\end{assumption}
This condition states that the combined local energy seminorms have trivial
kernel on the Dirichlet-reduced coarse space.  It is equivalent to positive
definiteness of the assembled stiffness matrix.  Independence of the retained
constraints, unisolvent sampling of the coarse P1 fields, and elimination of
boundary null modes are necessary structural checks; in the reported
computations the condition is verified algebraically after assembly.

\begin{theorem}[Symmetry, energy-norm coercivity, and discrete stability]
\label{thm:discrete_stability}
Suppose $\kappa_\epsilon>0$ almost everywhere and Assumption
\ref{ass:energy_unisolvence} holds.  Then
\[
 \|\bm V\|_{A_H}:=A_H(\bm V,\bm V)^{1/2}
\]
is a norm on $\mathcal X_H$, and
\begin{align}
 A_H(\bm U,\bm V)&=A_H(\bm V,\bm U),
 \label{eq:AH_symmetry}\\
 |A_H(\bm U,\bm V)|
 &\leq \|\bm U\|_{A_H}\|\bm V\|_{A_H},
 \label{eq:AH_continuity}\\
 A_H(\bm V,\bm V)&=\|\bm V\|_{A_H}^2.
 \label{eq:AH_coercivity}
\end{align}
Hence \eqref{eq:global_pairwise_weak} has a unique solution and satisfies
\begin{equation}
 \|\bm U_H\|_{A_H}
 \leq
 \|F_H\|_{\mathcal X_H'},
 \qquad
 \|F_H\|_{\mathcal X_H'}
 :=
 \sup_{\bm V\in\mathcal X_H\setminus\{0\}}
 \frac{|F_H(\bm V)|}{\|\bm V\|_{A_H}}.
\label{eq:discrete_stability_bound}
\end{equation}
\end{theorem}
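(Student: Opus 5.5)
The plan is to work entirely from the local energy representation \eqref{eq:AH_local_energy_representation}, which writes $A_H$ as a finite sum of fine-scale energies of the reconstructions $R_K$. I would first record that each $R_K$ is linear in the coarse data. Indeed, $R_K\bm V$ is a fixed linear combination of precomputed basis functions, and its coefficients are the point values $V_i^{(a)}(x_K)$ and the gradients $\partial_m V_i^{(a)}(x_K)$. These are linear functionals of $\bm V\in\mathcal X_H$: on the piecewise-linear space they are evaluations of the P1 field and of its constant gradient on the containing element. Hence each summand $a_{\epsilon,K}(R_K\bm U,R_K\bm V)$ is bilinear in $(\bm U,\bm V)$.

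\textbf{Symmetry and nonnegativity.} Symmetry \eqref{eq:AH_symmetry} follows at once, since $a_{\epsilon,K}(w,v)=\int_K\kappa_\epsilon\nabla w\cdot\nabla v$ is symmetric and the same map $R_K$ acts on trial and test fields. Nonnegativity follows from $\kappa_\epsilon>0$ a.e., which gives $A_H(\bm V,\bm V)=\sum_K\int_K\kappa_\epsilon|\nabla R_K\bm V|^2\ge 0$. I stress that this uses the reconstruction-energy form and not the cut-cell/interface-segment form \eqref{eq:global_pairwise_bilinear}, whose line densities need not be positive.

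\textbf{Norm and continuity.} $A_H$ is now a symmetric positive semidefinite bilinear form, so $\|\cdot\|_{A_H}$ is a seminorm. The Cauchy--Schwarz inequality for such forms gives \eqref{eq:AH_continuity}. Alternatively, one can apply Cauchy--Schwarz in each weighted $L^2(K)$ space and then the discrete Cauchy--Schwarz inequality over $K$. Identity \eqref{eq:AH_coercivity} holds by definition. For definiteness, suppose $\|\bm V\|_{A_H}=0$. Since $\kappa_\epsilon>0$ a.e., $\nabla R_K\bm V=0$ a.e. in every $K$, and Assumption~\ref{ass:energy_unisolvence} then gives $\bm V=0$. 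So $\|\cdot\|_{A_H}$ is a norm on $\mathcal X_H$.

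\textbf{Well-posedness and the bound.} $\mathcal X_H$ is finite-dimensional and $A_H$ is an inner product on it, so by Riesz (or Lax--Milgram with coercivity constant $1$) there is a unique solution of \eqref{eq:global_pairwise_weak}. Equivalently, the stiffness matrix is symmetric positive definite, hence invertible. $F_H$ is linear in $\bm V$, being built from the linear maps $R_E$ and the bulk loads, so $\|F_H\|_{\mathcal X_H'}$ is finite by finite dimensionality. Testing with $\bm V=\bm U_H$ gives $\|\bm U_H\|_{A_H}^2=F_H(\bm U_H)\le\|F_H\|_{\mathcal X_H'}\|\bm U_H\|_{A_H}$. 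Dividing by $\|\bm U_H\|_{A_H}$, or noting the case $\bm U_H=0$ is trivial, yields \eqref{eq:discrete_stability_bound}.

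\textbf{Main obstacle.} The only delicate point is justifying the passage from the assembled form to the representation \eqref{eq:AH_local_energy_representation}. I would take this directly from \eqref{eq:AH_full_energy_equivalence}, which rests on the exact local identity \eqref{eq:local_cutcell_gamma_identity}, itself a definition of $\mathcal P^\Gamma_{C_E}$. I would also check that the bulk blocks and interface blocks partition $\mathcal T_H$ without double counting. All positivity is then drawn from the reconstruction energies, and none from the individual coefficient blocks.
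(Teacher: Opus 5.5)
Your proposal is correct and follows essentially the same route as the paper. Both arguments draw symmetry, nonnegativity and continuity from the reconstruction-energy representation \eqref{eq:AH_local_energy_representation}, using $\kappa_\epsilon>0$ and Cauchy--Schwarz, get definiteness from Assumption~\ref{ass:energy_unisolvence}, invoke Lax--Milgram (equivalently, positive definiteness of the stiffness matrix), and test with $\bm V=\bm U_H$ to obtain \eqref{eq:discrete_stability_bound}. Your added remarks on the linearity of $R_K$, the finiteness of $\|F_H\|_{\mathcal X_H'}$, and the case $\bm U_H=0$ are harmless refinements of that same argument.
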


\begin{proof}
Symmetry follows from \eqref{eq:AH_local_energy_representation} and symmetry
of every fine-scale energy form $a_{\epsilon,K}$.  Positivity of
$\kappa_\epsilon$ gives
\[
 A_H(\bm V,\bm V)
 =
 \sum_{K\in\mathcal T_H}
 \int_K\kappa_\epsilon
 |\nabla R_K\bm V|^2\,\dif y
 \geq0.
\]
If this quantity is zero, every nonnegative summand vanishes; energy
unisolvence then implies $\bm V=0$.  Thus $\|\cdot\|_{A_H}$ is a norm.
Applying Cauchy--Schwarz first to each local energy and then to the finite sum
proves \eqref{eq:AH_continuity}; \eqref{eq:AH_coercivity} follows from the
definition of the norm.  The Lax--Milgram theorem, equivalently positive
definiteness of the finite-dimensional stiffness matrix, gives existence and
uniqueness.  Finally, choosing $\bm V=\bm U_H$ in
\eqref{eq:global_pairwise_weak} yields
\[
 \|\bm U_H\|_{A_H}^2
 =F_H(\bm U_H)
 \leq
 \|F_H\|_{\mathcal X_H'}\|\bm U_H\|_{A_H},
\]
which proves \eqref{eq:discrete_stability_bound}.
\end{proof}

\begin{remark}[Dependence of the stability constant]
\label{rem:stability_constant}
Theorem~\ref{thm:discrete_stability} establishes well-posedness in the natural
discrete energy norm $\|\cdot\|_{A_H}$, but does not provide a contrast-, $H$-,
or $H_\epsilon$-uniform equivalence with an unweighted coarse $H^1$ norm.
Since $\mathcal X_H$ is finite dimensional, coercivity with constant one in
$\|\cdot\|_{A_H}$ also implies
\[
 A_H(\bm V,\bm V)\geq
 \lambda_{\min}(\mathsf K_H)\|\bm v\|_{\ell^2}^2
\]
for the coefficient vector $\bm v$ and the Dirichlet-reduced stiffness matrix
$\mathsf K_H$.  No claim is made that $\lambda_{\min}(\mathsf K_H)$ itself is
uniform in the contrast, $H$, or $H_\epsilon$.  Such a parameter-uniform
estimate would require quantitative stable-decomposition and reconstruction
bounds beyond the discrete well-posedness proved here.
\end{remark}

\section{Numerical Results}
\label{sec:nume}

We present numerical tests that assess I-MCHM on three geometries of increasing
complexity: straight interfaces with continuum-coupling ablations
(Section~\ref{sec:case1}), a curved interface (Section~\ref{sec:case2}), and a
three-subdomain triple junction (Section~\ref{sec:case3}).
The primary metric is a continuum-wise local-average error against fine-grid
solutions of the same problem.  The continua are the high- and low-permeability
regions (one continuum for a homogeneous subdomain).  We use
\[
    \kappa_{\rm high}=1,
    \qquad
    \kappa_{\rm low}=H_\epsilon^2/1000,
\]
with $h=H_\epsilon/N_{\rm sub}$ and $N_{\rm sub}=8$ unless otherwise stated.
All refinement studies set $\epsilon=H_\epsilon$ and let the contrast vary with
this scale.  Each row therefore compares a member of an $\epsilon$-indexed PDE
family rather than mesh convergence for one fixed coefficient field.  In
particular, decreasing $H_\epsilon=\epsilon$ at fixed coarse $H$ only drives the
error toward a residual controlled by $H$; the primary evidence is therefore a
coupled refinement in which the ratio $H/H_\epsilon$ is held fixed and chosen
sufficiently large that the observing scale resolves the local continua.
Throughout the reported experiments we take the representative value
$H/H_\epsilon=4$ (equivalently $H_\epsilon=\epsilon=H/4$); the precise integer
ratio is not singled out by the theory.

Let $K_\alpha^\epsilon$ be an $H_\epsilon$ observing cell and let
$\omega_{\alpha,j}^{(k)}$ denote the part of this cell belonging to continuum
$j$ of subdomain $\Omega_k$.
The averaging operator is
\[
\bigl(\Pi_\epsilon w\bigr)_{\alpha,j}^{(k)}
:=
\frac{1}{|\omega_{\alpha,j}^{(k)}|}
\int_{\omega_{\alpha,j}^{(k)}} w(x)\,\dif x
\quad\text{whenever }|\omega_{\alpha,j}^{(k)}|>0,
\]
with the corresponding volume-weighted $\ell^2$ norm.  The reported errors
measure numerical upscaling at fixed $\epsilon$ and do not separately estimate
the homogenization remainder associated with an $\epsilon\to0$ limit.

For the fine finite-element solution $u_h\approx u_\epsilon$ on the mesh of
size $h$ and the I-MCHM continuum fields
$\bm U_H=(U_j^{(k)})$, define
\[
    \overline u_{\alpha,j}^{(k),h}
    =
    \frac{1}{|\omega_{\alpha,j}^{(k)}|}
    \int_{\omega_{\alpha,j}^{(k)}} u_h(x)\,\dif x,
    \qquad
    \overline U_{\alpha,j}^{(k),H}
    =
    \frac{1}{|\omega_{\alpha,j}^{(k)}|}
    \int_{\omega_{\alpha,j}^{(k)}} U_j^{(k)}(x)\,\dif x .
\]
The homogenized local-average relative error is
\[
    E_{\rm loc}^{\rm Hom}
    =
    \left(
    \frac{
    \sum_{\alpha,k,j} |\omega_{\alpha,j}^{(k)}|
    \left|\overline U_{\alpha,j}^{(k),H}
    -
    \overline u_{\alpha,j}^{(k),h}\right|^2}
    {
    \sum_{\alpha,k,j} |\omega_{\alpha,j}^{(k)}|
    \left|\overline u_{\alpha,j}^{(k),h}\right|^2}
    \right)^{1/2}.
\]
Only positive-volume continua are included.  Restricting the same quotient to
one pair $(k,j)$ yields $E_j^{(k),\rm Hom}$; tables abbreviate this quantity by
$E_j^{(k)}$, and we write $E_{\rm all}=E_{\rm loc}^{\rm Hom}$.

For the straight-interface family in Table~\ref{tab:case1_convergence} we also
report a relative energy error based on the same local oversampling
reconstruction used to assemble the homogenized energy.  On each coarse
neighborhood $K$, let $R_K\bm U_H$ be the constrained reconstruction of the
coarse continuum solution on the oversampled patch, restricted to $K$.
Writing $a_{\epsilon,K}$ for the fine energy form integrated on $K$,
\[
    E_A^{\rm Hom}
    =
    \frac{
    \Bigl(
    \sum_{K}
    a_{\epsilon,K}
    \bigl(R_K\bm U_H-u_h,\,R_K\bm U_H-u_h\bigr)
    \Bigr)^{1/2}
    }{\|u_h\|_{a_\epsilon}},
    \qquad
    \|v\|_{a_\epsilon}=\sqrt{a_\epsilon(v,v)}.
\]
The remaining tables report only the local-average errors defined above.

\subsection{Case 1: straight-line interfaces and continuum-coupling ablations}
\label{sec:case1}

The first group comprises two straight interfaces.  The first subcase uses the
vertical interface $x=0.5$, and the second uses the antidiagonal interface
$x+y=1$.  In both subcases the two subdomains contain different channelized
media, and each subdomain is represented by two continua.  The sine-source
tests employ the weighted source
\[
    f_s(x,y)=\kappa(x,y)\sin(\pi x)\sin(\pi y).
\]

\subsubsection{Subcase 1a: interface \texorpdfstring{$x=0.5$}{x=0.5}}

For the vertical interface, the left and right halves have different channel
orientations.  At $H=1/7$ and $H_\epsilon=1/28$, the local-average error is
$E_{\rm loc}^{\rm Hom}=8.30\%$.  Along the same coupled path with
$H_\epsilon=\epsilon=H/4$, the geometry yields
$5.43\%$ at $H=1/9$, $H_\epsilon=1/36$;
$3.84\%$ at $H=1/11$, $H_\epsilon=1/44$; and
$E_{\rm loc}^{\rm Hom}=2.87\%$ at $H=1/13$, $H_\epsilon=1/52$.

\subsubsection{Subcase 1b: interface \texorpdfstring{$x+y=1$}{x+y=1}}

For the antidiagonal interface, the observing grid produces an aligned
staircase.  At $H=1/5$ and $H_\epsilon=1/20$, the local-average error is
$12.91\%$.  For the same geometry, the refined cases $H=1/9$,
$H_\epsilon=1/36$ and $H=1/13$, $H_\epsilon=1/52$ give errors of $6.36\%$ and
$4.34\%$, respectively.  Thus the numerical upscaling error decreases as $H$
and $\epsilon=H_\epsilon$ decrease together at this fixed observing-to-coarse
ratio.

Table~\ref{tab:case1_convergence} summarizes the coupled refinement path with
$H_\epsilon=\epsilon=H/4$ for the two straight-interface subcases.  Each
row compares I-MCHM with the fine solution for the same member of the
$\epsilon$-dependent problem family.

\begin{table}[ht]
\centering
\scriptsize
\caption{Numerical upscaling error for the straight-line interface cases along a coupled refinement path with sufficiently large fixed ratio $H/H_\epsilon$ (reported with $H_\epsilon=\epsilon=H/4$).  Each row compares I-MCHM with the fine solution for the same member of the $\epsilon$-dependent problem family.  The columns $E_j^{(k)}$ report the local-average error for continuum $U_j^{(k)}$; $E_{\rm all}$ is the volume-weighted aggregate $E_{\rm loc}^{\rm Hom}$. The column $E_A^{\rm Hom}$ is the relative energy error of the local oversampling Hom reconstruction. Columns labeled dofs report $\dim\mathcal X_H$ after Dirichlet elimination; the per-continuum columns use the abbreviation $E_j^{(k)}\equiv E_j^{(k),\rm Hom}$.}
\resizebox{\textwidth}{!}{%
\begin{tabular}{cccccccccc}
\hline
Interface & $H$ & $H_\epsilon$ & dofs & $E_{\rm all}$ & $E_A^{\rm Hom}$ & $E_1^{(1)}$ & $E_2^{(1)}$ & $E_1^{(2)}$ & $E_2^{(2)}$ \\
\hline
$x=0.5$ & $1/5$ & $1/20$ & 48 & 14.26\% & 37.52\% & 17.27\% & 17.48\% & 11.09\% & 11.32\% \\
$x=0.5$ & $1/7$ & $1/28$ & 96 & 8.30\% & 28.74\% & 10.38\% & 10.50\% & 6.01\% & 6.24\% \\
$x=0.5$ & $1/9$ & $1/36$ & 160 & 5.43\% & 23.34\% & 6.88\% & 6.96\% & 3.79\% & 4.01\% \\
$x=0.5$ & $1/11$ & $1/44$ & 240 & 3.84\% & 19.70\% & 4.89\% & 4.94\% & 2.63\% & 2.84\% \\
$x=0.5$ & $1/13$ & $1/52$ & 336 & 2.87\% & 17.07\% & 3.64\% & 3.68\% & 1.96\% & 2.16\% \\
$x+y=1$ & $1/5$ & $1/20$ & 52 & 12.91\% & 33.98\% & 10.68\% & 11.06\% & 14.38\% & 14.10\% \\
$x+y=1$ & $1/7$ & $1/28$ & 104 & 7.50\% & 26.89\% & 7.15\% & 7.31\% & 7.69\% & 7.70\% \\
$x+y=1$ & $1/9$ & $1/36$ & 172 & 6.36\% & 25.45\% & 6.18\% & 6.31\% & 6.47\% & 6.35\% \\
$x+y=1$ & $1/11$ & $1/44$ & 256 & 5.28\% & 22.70\% & 4.55\% & 4.55\% & 5.83\% & 5.64\% \\
$x+y=1$ & $1/13$ & $1/52$ & 356 & 4.34\% & 20.56\% & 3.70\% & 3.63\% & 4.84\% & 4.66\% \\
\hline
\end{tabular}}
\label{tab:case1_convergence}
\end{table}

\begin{figure}[ht]
    \centering
    \includegraphics[width=0.78\textwidth]{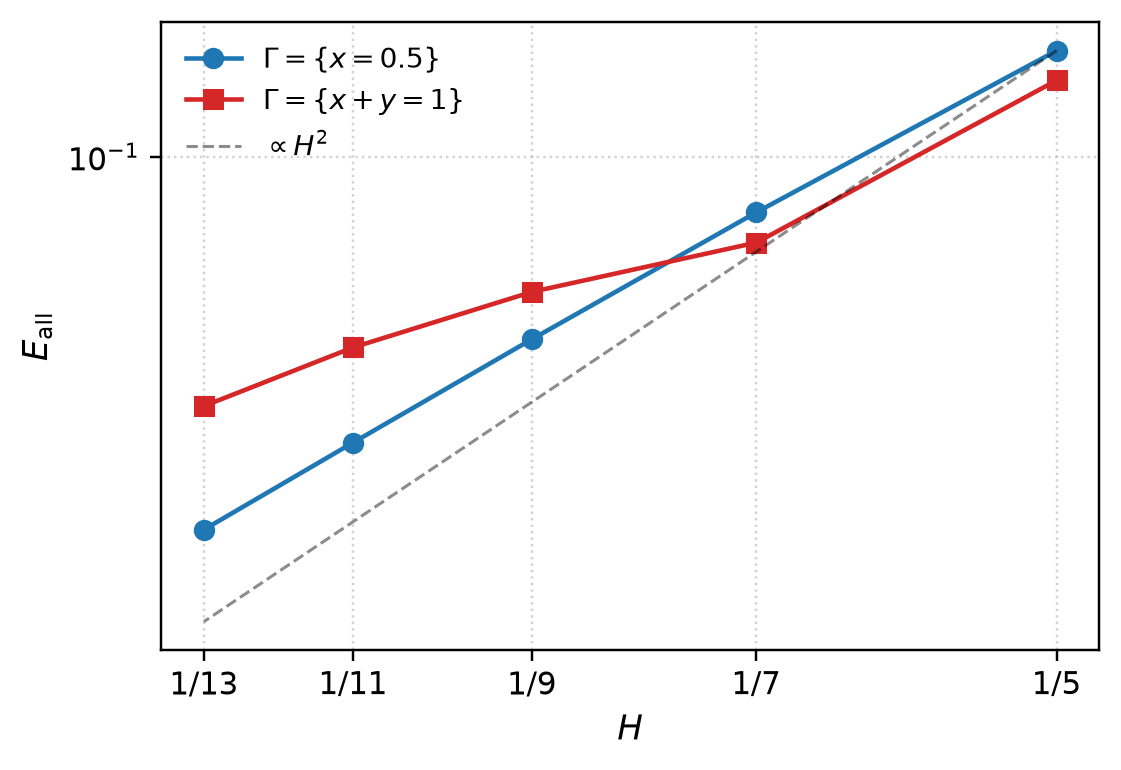}
    \caption{Aggregate local-average error $E_{\rm all}$ for the two straight-interface
    families along the coupled path with $H_\epsilon=\epsilon=H/4$.  The dashed
    line indicates an $H^{2}$ reference slope.}
    \label{fig:case1_Eall_vs_H}
\end{figure}

Figure~\ref{fig:case1_Eall_vs_H} displays the same aggregate errors against
$H$.  With a fixed, sufficiently large observing-to-coarse ratio, $E_{\rm all}$
decreases steadily as $H$ is refined for both interfaces.
Along this path, $E_A^{\rm Hom}$ decreases from $37.52\%$ to $17.07\%$ for the
vertical family and from $33.98\%$ to $20.56\%$ for the antidiagonal family,
remaining larger than $E_{\rm all}$, as expected for an energy comparison.

By contrast, refining only the microscopic/observing scale at fixed coarse $H$
does not produce the same decay.  For the vertical interface with $H=1/5$,
decreasing $H_\epsilon=\epsilon$ from $H$ to $H/16$ reduces $E_{\rm all}$ from
$21.83\%$ to about $14.4\%$, after which the error saturates.  The residual
level is controlled by the coarse-grid resolution $H$ (and by the
$\epsilon$-dependence of the PDE family), so further reduction of
$H_\epsilon=\epsilon$ alone cannot eliminate the coarse approximation error.
Accordingly, the primary convergence evidence reported below is coupled
refinement at a fixed, sufficiently large $H/H_\epsilon$, rather than a
fixed-$H$ observing-scale sweep.

\subsubsection{Continuum-coupling ablations}

We next isolate the interface-specific reconstruction from the subdomain bulk
models.  Both bulk-only baselines employ the same local energy assembly as
I-MCHM, except that an interface-cut coarse element is treated by same-side bulk
patches for the two materials: each contribution is integrated only over its
part of the cut element, and all cross-material blocks are omitted.
The discontinuous baseline leaves the two coarse traces independent; its
variational problem therefore induces a natural zero-flux condition on each
side.  The continuous baseline instead imposes
$U_j^{(1)}=U_j^{(2)}$ along the interface for the two matching continuum
indices, while allowing the normal derivatives to differ.

Table~\ref{tab:case1_interface_ablation} reports both a fixed-$H$ observing-scale
test and the first three points of the coupled path with
$H_\epsilon=\epsilon=H/4$.  For the axis-aligned interface, the two bulk-only
variants remain close, and I-MCHM is slightly more accurate at the smaller
tested $H$.  The antidiagonal case behaves differently: the independent-trace
error grows from $21.97\%$ to $45.78\%$ as $H$ decreases, whereas the
continuous-trace error decreases from $12.52\%$ to $4.21\%$; I-MCHM decreases
from $12.91\%$ to $6.36\%$.  Thus the failure of the discontinuous bulk model
is associated with its zero-flux closure, while a prescribed zero-jump closure
is highly effective when the continua on the two sides admit a one-to-one
correspondence and share the same continuum meaning on the interface.
This continuous baseline is not a general replacement for the present interface
construction: it assumes matching continuum labels and a pointwise transmission
law, whereas I-MCHM permits different numbers and meanings of continua and
introduces their coupling variationally.

\begin{table}[ht]
\centering
\caption{Bulk-only ablation over several $H$ and $H_\epsilon$.  Both bulk-only baselines omit all cross-material blocks.  The discontinuous version leaves the two subdomain traces independent (natural zero flux), whereas the continuous version enforces zero jump between matching continuum traces while allowing the normal derivatives to differ.}
\begin{tabular}{lccccc}
\hline
Interface & $H$ & $H_\epsilon$ & bulk-only & bulk-only & I-MCHM \\
 & & & discontinuous & continuous & \\
\hline
$x=0.5$ & $1/5$ & $1/5$ & 16.12\% & 16.64\% & 21.83\% \\
$x=0.5$ & $1/5$ & $1/10$ & 15.01\% & 15.86\% & 13.92\% \\
$x=0.5$ & $1/5$ & $1/20$ & 14.61\% & 15.46\% & 14.26\% \\
$x=0.5$ & $1/7$ & $1/28$ & 8.51\% & 9.25\% & 8.30\% \\
$x=0.5$ & $1/9$ & $1/36$ & 5.59\% & 6.35\% & 5.43\% \\
$x+y=1$ & $1/5$ & $1/20$ & 21.97\% & 12.52\% & 12.91\% \\
$x+y=1$ & $1/7$ & $1/28$ & 36.60\% & 6.76\% & 7.50\% \\
$x+y=1$ & $1/9$ & $1/36$ & 45.78\% & 4.21\% & 6.36\% \\
\hline
\end{tabular}
\label{tab:case1_interface_ablation}
\end{table}

\subsubsection{Unequal continuum counts}

To test the setting in which continua cannot be paired, we retain the
vertical interface $x=0.5$ but replace the right-hand pattern by the
homogeneous medium $\kappa\equiv1$.  The right subdomain therefore retains a
single continuum, while the left subdomain keeps the two-continuum channel
pattern of Case~1.  The source is the unscaled Gaussian
\[
f(x,y)
=
\exp\!\left(
-\frac{(x-0.4)^2+(y-0.5)^2}{2(0.15)^2}
\right),
\]
centered near the interface so that a nontrivial response is transmitted into
the homogeneous side.  For the continuous bulk-only baseline we identify every
continuum degree of freedom attached to the same interface node,
\[
U^{\mathrm{right}}
=
U_1^{\mathrm{left}}
=
U_2^{\mathrm{left}}
\quad\text{on each interior interface node,}
\]
rather than assuming a one-to-one continuum pairing.

Table~\ref{tab:case1_asymmetric_ablation} reports the coupled path with
$H_\epsilon=\epsilon=H/4$ for $H=1/5,1/7,1/9$.  Along this path the
I-MCHM aggregate error decreases from $13.33\%$ to $4.00\%$, while the
discontinuous and continuous bulk-only closures decrease only from about
$20\%$--$21\%$ to about $11\%$--$12\%$.  At every tested $H$, I-MCHM remains
substantially more accurate.  In particular, the continuous shared-node
constraint does not recover the accuracy of I-MCHM: it over-constrains the
transmitted right-hand continuum and also degrades the left high-conductivity
continuum.  Thus, when the adjacent subdomains have unequal continuum
structures, the two-sided interface reconstruction is essential.

\begin{table}[ht]
\centering
\caption{Unequal-continuum bulk-only ablation for the vertical interface $x=0.5$ along a coupled path with $H_\epsilon=\epsilon=H/4$.  The right subdomain is homogeneous with $\kappa\equiv1$ (one continuum), while the left subdomain retains the two-continuum channel pattern.  The source is the unscaled Gaussian centered at $(0.4,0.5)$ with standard deviation $0.15$.  The continuous bulk-only baseline identifies every continuum degree of freedom on the same interface node.  Errors are continuum local averages against the fine reference.}
\begin{tabular}{ccccc}
\hline
$H$ & $H_\epsilon$ & bulk-only & bulk-only & I-MCHM \\
 & & discontinuous & continuous & \\
\hline
$1/5$ & $1/20$ & 19.73\% & 21.40\% & 13.33\% \\
$1/7$ & $1/28$ & 13.55\% & 13.99\% & 6.18\% \\
$1/9$ & $1/36$ & 11.10\% & 11.58\% & 4.00\% \\
\hline
\end{tabular}
\label{tab:case1_asymmetric_ablation}
\end{table}

\FloatBarrier
\subsection{Case 2: curved interface}
\label{sec:case2}

The second case uses the circular interface
\[
    (x-0.5)^2+(y-0.5)^2=0.2^2 .
\]
The medium inside the circle is denoted by $\Omega_2$, and the medium outside
by $\Omega_1$.  Curved and oblique interfaces are represented on the observing
grid by staircase geometry: the smooth circle is assigned to $H_\epsilon$ cells
by evaluating the level set at cell centers, so the continuum variables see a
staircase approximation of the circle.  Geometric approximation error is
therefore not separated from upscaling error.  We use the weighted sine source
$f_s(x,y)=\kappa(x,y)\sin(\pi x)\sin(\pi y)$.

Figure~\ref{fig:curved_case} shows the curved-interface medium and the
fine-grid solution for $H=1/7$, $H_\epsilon=1/28$.  The local-average error is
$E_{\rm all}=7.93\%$.  Along the coupled path with
$H_\epsilon=\epsilon=H/4$, this source yields errors of $4.90\%$ at
$H=1/9$, $H_\epsilon=1/36$ and $5.25\%$ at $H=1/11$, $H_\epsilon=1/44$.

\begin{figure}[ht]
    \centering
    \begin{subfigure}{0.48\textwidth}
        \centering
        \includegraphics[width=\textwidth]{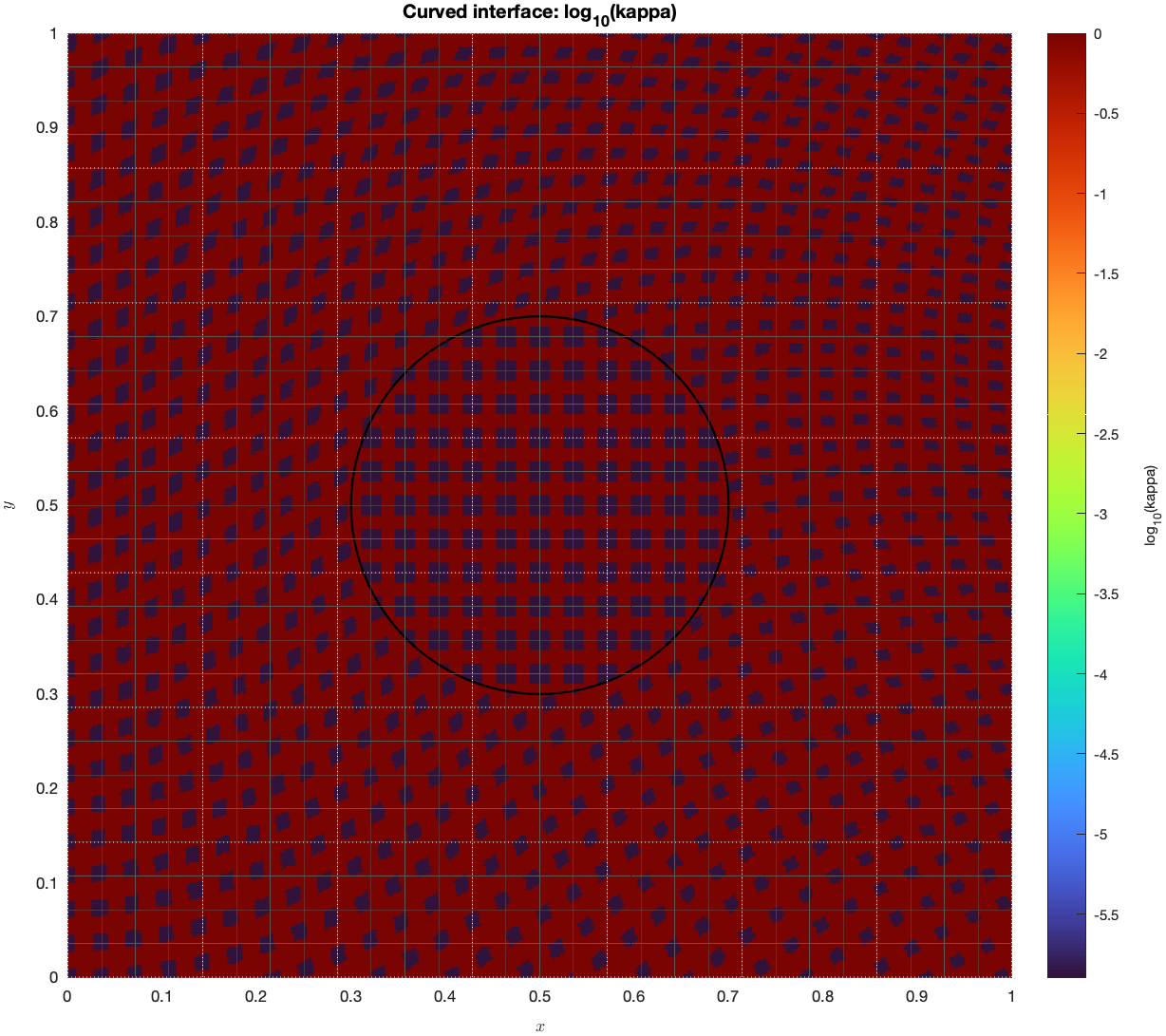}
        \caption{Medium coefficient.}
    \end{subfigure}
    \hfill
    \begin{subfigure}{0.48\textwidth}
        \centering
        \includegraphics[width=\textwidth]{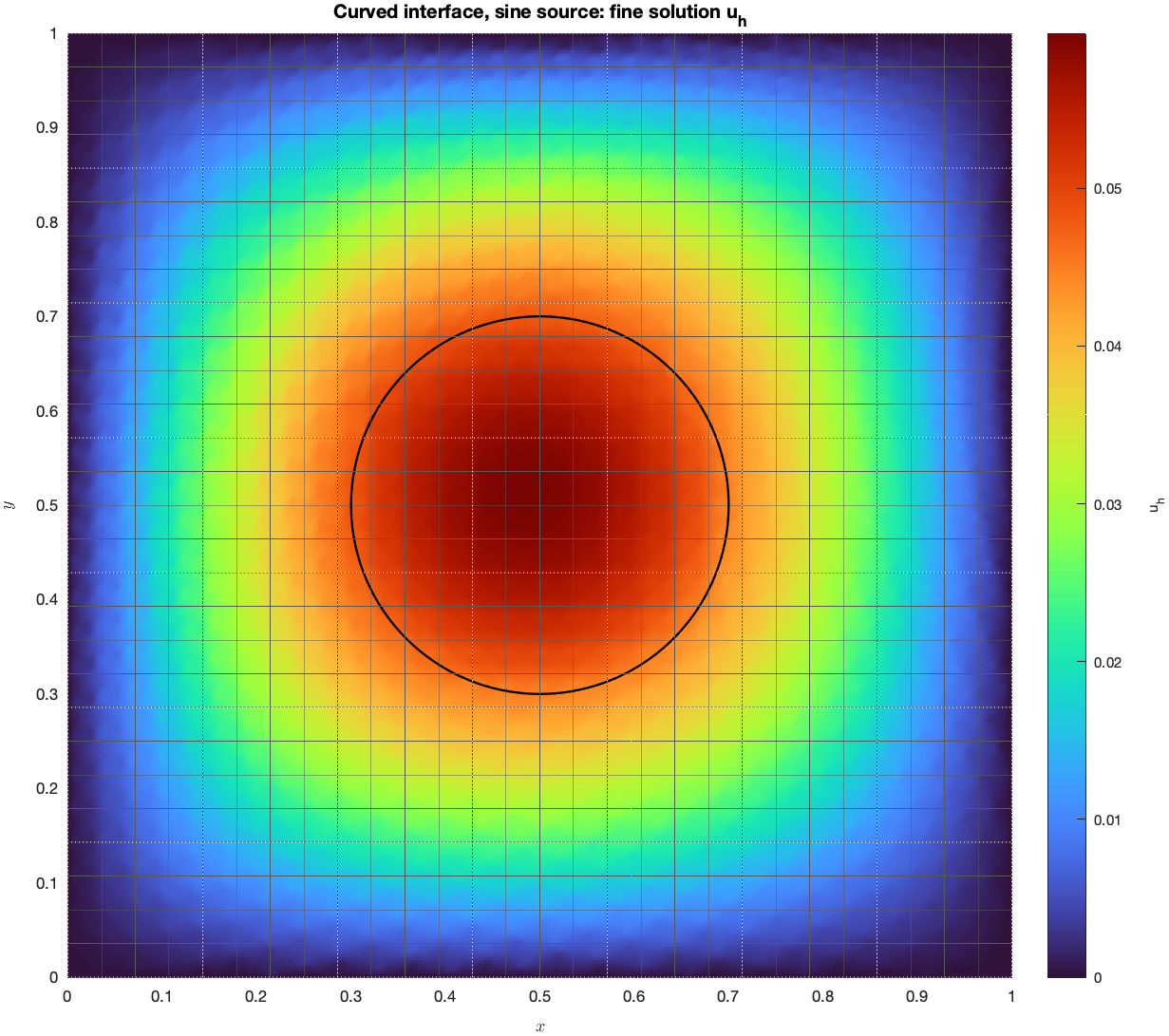}
        \caption{Fine-grid solution.}
    \end{subfigure}
    \caption{Curved-interface case with $H=1/7$, $H_\epsilon=1/28$, and
    $N_{\rm sub}=10$.}
    \label{fig:curved_case}
\end{figure}

Table~\ref{tab:case2_convergence} reports the coupled refinement path with
$H_\epsilon=\epsilon=H/4$ for the curved-interface sine source.

\begin{table}[ht]
\centering
\scriptsize
\caption{Numerical upscaling error for the curved-interface sine-source case along a coupled refinement path with $H_\epsilon=\epsilon=H/4$.  Each row compares I-MCHM with the fine solution for the same member of the $\epsilon$-dependent problem family.  The columns $E_j^{(k)}$ report the local-average error for continuum $U_j^{(k)}$; $E_{\rm all}$ is the volume-weighted aggregate $E_{\rm loc}^{\rm Hom}$. Columns labeled dofs report $\dim\mathcal X_H$ after Dirichlet elimination; the per-continuum columns use the abbreviation $E_j^{(k)}\equiv E_j^{(k),\rm Hom}$.}
\resizebox{\textwidth}{!}{%
\begin{tabular}{cccccccc}
\hline
$H$ & $H_\epsilon$ & dofs & $E_{\rm all}$ & $E_1^{(1)}$ & $E_2^{(1)}$ & $E_1^{(2)}$ & $E_2^{(2)}$ \\
\hline
$1/5$ & $1/20$ & 60 & 9.41\% & 9.59\% & 9.21\% & 9.77\% & 9.49\% \\
$1/7$ & $1/28$ & 100 & 7.93\% & 8.21\% & 7.22\% & 8.85\% & 8.47\% \\
$1/9$ & $1/36$ & 180 & 4.90\% & 4.95\% & 4.88\% & 5.52\% & 4.54\% \\
$1/11$ & $1/44$ & 260 & 5.25\% & 7.75\% & 4.71\% & 5.10\% & 4.96\% \\
\hline
\end{tabular}}
\label{tab:case2_convergence}
\end{table}

\FloatBarrier
\subsection{Case 3: three subdomains with a triple junction}
\label{sec:case3}

The third case comprises three subdomains.  The bottom triangular subdomain is
separated from the top part by the two line segments joining $(0,0)$ to
$(0.5,0.5)$ and $(0.5,0.5)$ to $(1,0)$.  The top part is split into left and
right subdomains by the vertical segment $x=0.5$ for $y>0.5$.  This geometry
therefore tests three interface segments meeting at a triple junction, together with
a spatially varying number of active continua.  The source is a Gaussian
centered at $(0.5,0.25)$ with standard deviation $0.15$, supported only on the
high-conductivity continua,
\[
\begin{aligned}
    f(x,y)&=
    g(x,y)\,
    \mathbf{1}_{\{\kappa(x,y)>
    \sqrt{\kappa_{\mathrm{low}}\kappa_{\mathrm{high}}}\}},
    \\[-0.25em]
    g(x,y)&=
    \exp\!\left(
    -\frac{(x-0.5)^2+(y-0.25)^2}{2(0.15)^2}
    \right).
\end{aligned}
\]

Figure~\ref{fig:three_subdomain_case} shows the three-subdomain coefficient
field and fine-grid solution for $H=1/7$, $H_\epsilon=1/28$, and
$N_{\rm sub}=8$.  This experiment tests the adjacency-graph formulation with
three coupled pairwise interface reconstructions.  Their contributions share
the global continuum unknowns associated with each subdomain, rather than
forming independent two-subdomain models.  No additional point law is imposed
at the triple junction.
For this representative case, the homogenized local-average error is
$E_{\rm loc}^{\rm Hom}=9.54\%$.

\begin{figure}[ht]
    \centering
    \begin{subfigure}{0.48\textwidth}
        \centering
        \includegraphics[width=\textwidth]{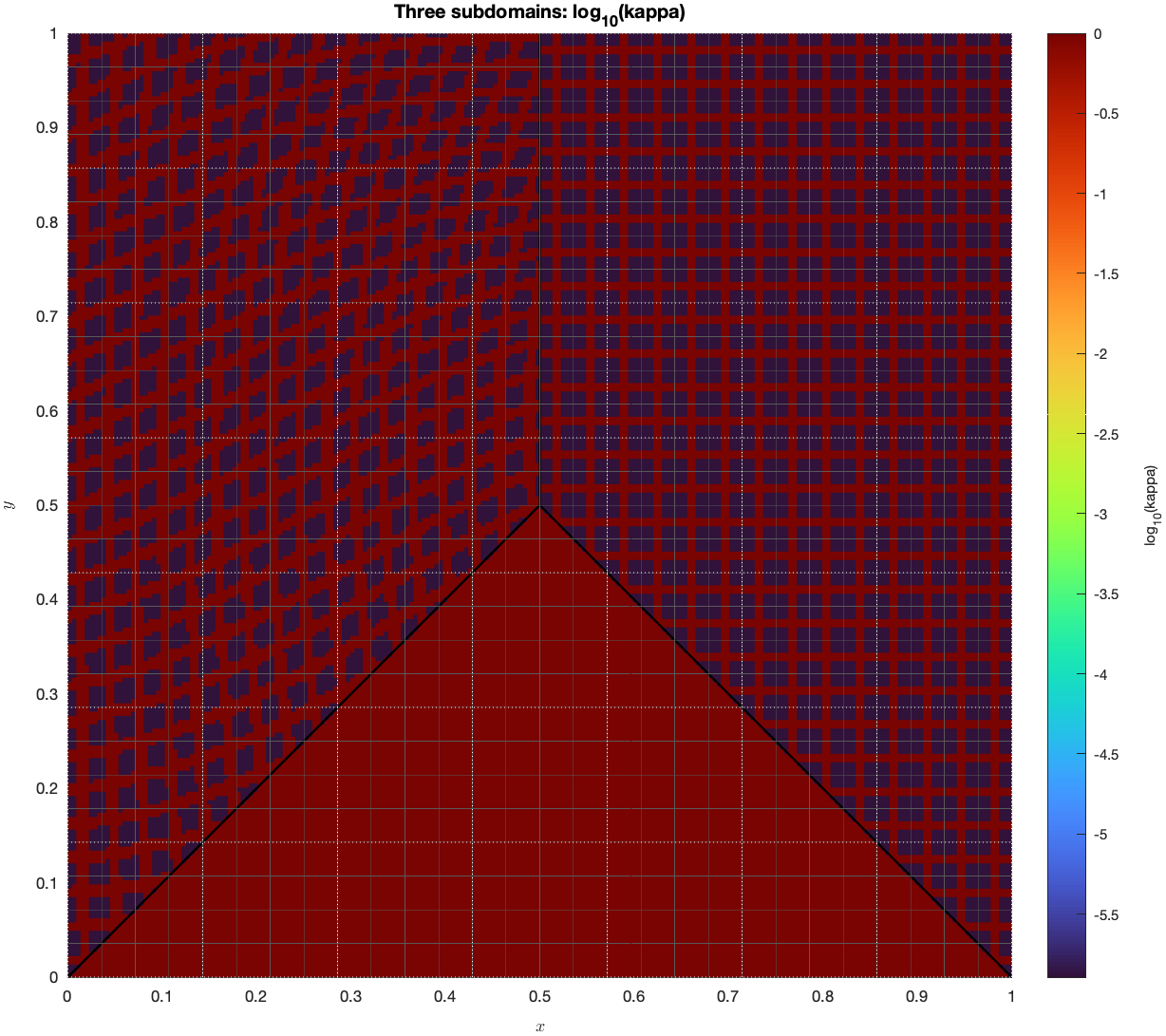}
        \caption{Medium coefficient.}
    \end{subfigure}
    \hfill
    \begin{subfigure}{0.48\textwidth}
        \centering
        \includegraphics[width=\textwidth]{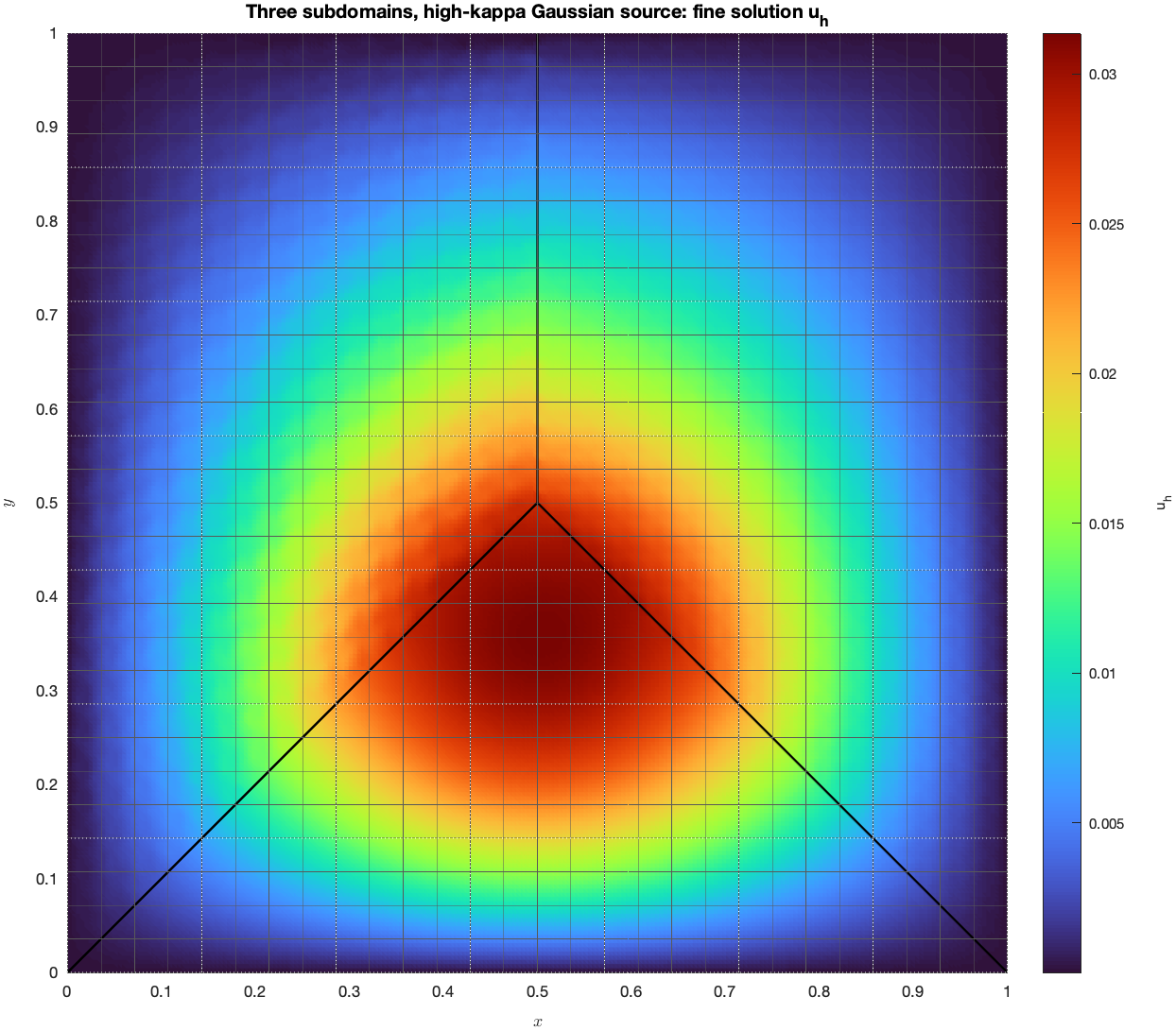}
        \caption{Fine-grid solution.}
    \end{subfigure}
    \caption{Three-subdomain case with $H=1/7$, $H_\epsilon=1/28$,
    $N_{\rm sub}=8$, and high-conductivity-supported Gaussian source centered
    at $(0.5,0.25)$ with standard deviation $0.15$.}
    \label{fig:three_subdomain_case}
\end{figure}

Table~\ref{tab:case3_convergence} shows the coupled refinement path with
$H_\epsilon=\epsilon=H/4$ for the three-subdomain high-conductivity-supported
Gaussian source.  Because every two-continuum cell portion contains both
phases, all five component errors are defined in every row.  The aggregate
local-average error decreases from $13.47\%$ at $H=1/5$ to $7.65\%$ at
$H=1/11$.
The published three-subdomain path terminates at $H=1/11$: a further $H=1/13$
run retained both continua in every cell portion, but one portion had only a
$1/16$ phase fraction and produced a large nonmonotone spike in $E_{11}$.

\begin{table}[ht]
\centering
\scriptsize
\caption{Numerical upscaling error for the three-subdomain case with high-conductivity-supported Gaussian source ($f=0$ on low-$\kappa$ continua; $\sigma=0.15$) along a coupled refinement path with $H_\epsilon=\epsilon=H/4$.  Each row compares I-MCHM with the fine solution for the same member of the $\epsilon$-dependent high-contrast problem family.  The columns $E_j^{(k)}$ report the local-average error for continuum $U_j^{(k)}$; $E_{\rm all}$ is the volume-weighted aggregate $E_{\rm loc}^{\rm Hom}$. Columns labeled dofs report $\dim\mathcal X_H$ after Dirichlet elimination; the per-continuum columns use the abbreviation $E_j^{(k)}\equiv E_j^{(k),\rm Hom}$.}
\resizebox{\textwidth}{!}{%
\begin{tabular}{ccccccccc}
\hline
$H$ & $H_\epsilon$ & dofs & $E_{\rm all}$ & $E_1^{(1)}$ & $E_2^{(1)}$ & $E_1^{(2)}$ & $E_2^{(2)}$ & $E_1^{(3)}$ \\
\hline
$1/5$ & $1/20$ & 48 & 13.47\% & 13.72\% & 13.85\% & 7.56\% & 7.78\% & 15.61\% \\
$1/7$ & $1/28$ & 93 & 9.54\% & 10.20\% & 10.36\% & 5.50\% & 5.59\% & 10.76\% \\
$1/9$ & $1/36$ & 152 & 8.37\% & 9.09\% & 9.25\% & 5.96\% & 6.03\% & 9.02\% \\
$1/11$ & $1/44$ & 225 & 7.65\% & 8.39\% & 8.67\% & 5.58\% & 5.65\% & 8.11\% \\
\hline
\end{tabular}}
\label{tab:case3_convergence}
\end{table}

Overall, the straight-interface tests exhibit the most stable local-average
behavior along the coupled $H$--$\epsilon$ refinement paths.  The
unequal-continuum vertical test further shows that I-MCHM remains effective
when the two sides retain different numbers of continua: its aggregate error
decreases from $13.33\%$ at $H=1/5$ to $4.00\%$ at $H=1/9$, while both
bulk-only closures stay above $11\%$.  The curved-interface sine source remains
comparable to the straight cases at the smaller tested scales.  The
three-subdomain path remains below $10\%$ through $H=1/11$ on this
multiple-interface geometry.

\FloatBarrier
\section{Conclusion}
\label{sec:conclu}

We have developed I-MCHM as a variational interface closure for multicontinuum
homogenization.  Relative to bulk MCHM, which determines continuum equations
inside each subdomain, the distinguishing ingredients are two-sided constrained
bases on interface neighborhoods and a coarse bilinear form assembled from bulk
cut-cell integrals together with interface-segment corrections over the
subdomain adjacency graph.  Adjacent subdomains may therefore retain different
continuum counts and meanings without a prescribed pointwise transmission law.
A bulk-reference plus zero-moment correction decomposition identifies the
interface contribution as a localized modification of the bilinear form and
explains recovery of the standard bulk basis away from the interface.  Under
discrete energy unisolvence, the assembled form is symmetric, continuous, and
coercive in its reconstruction-energy norm, so the coarse problem is uniquely
solvable and satisfies the a priori bound \eqref{eq:discrete_stability_bound}.

The numerical experiments support this construction along a coupled path with
a sufficiently large fixed ratio $H/H_\epsilon$ (reported with
$H_\epsilon=\epsilon=H/4$): aggregate continuum-average errors for the
oblique, curved-sine, and three-subdomain tests lie in the range
$7.50$--$9.54\%$ at $H=1/7$ (vertical straight: $8.30\%$).  The two straight
families reach $2.87\%$ and $4.34\%$ at $H=1/13$, and the three-subdomain path
reaches $7.65\%$ at $H=1/11$.  Ablations show why the interface term matters:
independent zero-flux bulk models deteriorate on the oblique interface, while a
zero-jump constraint works when continua can be paired but fails for unequal
continuum structures, where I-MCHM reduces the aggregate error to $4.00\%$ at
$H=1/9$ compared with about $11\%$--$14\%$ for either bulk-only closure.


\section*{CRediT authorship contribution statement}
\textbf{Wing Tat Leung:} Writing -- review \& editing, Writing -- original draft,
Methodology, Investigation, Formal analysis, Conceptualization.
\textbf{Zhihang Xu:} Writing -- review \& editing, Writing -- original draft,
Methodology, Investigation, Conceptualization.

\section*{Funding}
Wing Tat Leung is partially supported by the Hong Kong RGC Early Career Scheme
21307223.

\section*{Declaration of competing interest}
The authors declare that they have no known competing financial interests or
personal relationships that could have appeared to influence the work reported
in this paper.

\section*{Data availability}
No data was used for the research described in the article.

\section*{Acknowledgments}
Wing Tat Leung is partially supported by the Hong Kong RGC Early Career Scheme
21307223.

\section*{Declaration of generative AI and AI-assisted technologies}
During the preparation of this work, the authors used ChatGPT to improve
readability and language polish. After using this tool, the authors reviewed
and edited the content as needed and take full responsibility for the content
of the publication.

\bibliographystyle{elsarticle-num}
\bibliography{references}

@article{hou1997multiscale,
  author = {Hou, Thomas Y. and Wu, Xiao-Hui},
  title = {A multiscale finite element method for elliptic problems in composite materials and porous media},
  journal = {Journal of Computational Physics},
  volume = {134},
  number = {1},
  pages = {169--189},
  year = {1997},
  doi = {10.1006/jcph.1997.5682}
}

@book{efendiev2009multiscale,
  author = {Efendiev, Yalchin and Hou, Thomas Y.},
  title = {Multiscale Finite Element Methods: Theory and Applications},
  series = {Surveys and Tutorials in the Applied Mathematical Sciences},
  volume = {4},
  publisher = {Springer},
  address = {New York},
  year = {2009},
  doi = {10.1007/978-0-387-09496-0}
}

@article{chung2018non,
  author = {Chung, Eric T. and Efendiev, Yalchin and Leung, Wing Tat and Vasilyeva, Maria and Wang, Yating},
  title = {Non-local multi-continua upscaling for flows in heterogeneous fractured media},
  journal = {Journal of Computational Physics},
  volume = {372},
  pages = {22--34},
  year = {2018},
  doi = {10.1016/j.jcp.2018.05.038}
}

@article{zhao2020analysis,
  author = {Zhao, Lina and Chung, Eric T.},
  title = {An analysis of the {NLMC} upscaling method for high contrast problems},
  journal = {Journal of Computational and Applied Mathematics},
  volume = {367},
  pages = {112480},
  year = {2020},
  doi = {10.1016/j.cam.2019.112480}
}

@article{efendiev2022multicontinuum,
  author = {Efendiev, Yalchin and Leung, Wing Tat},
  title = {Multicontinuum homogenization and its relation to nonlocal multicontinuum theories},
  journal = {Journal of Computational Physics},
  volume = {474},
  pages = {111761},
  year = {2023},
  doi = {10.1016/j.jcp.2022.111761}
}

@book{milton2022theory,
  author = {Milton, Graeme W.},
  title = {The Theory of Composites},
  publisher = {Society for Industrial and Applied Mathematics},
  address = {Philadelphia},
  year = {2022}
}

@book{bear2013dynamics,
  author = {Bear, Jacob},
  title = {Dynamics of Fluids in Porous Media},
  publisher = {Dover Publications},
  address = {Mineola, New York},
  year = {2013}
}

@article{berre2018flow,
  author = {Berre, Inga and Doster, Florian and Keilegavlen, Eirik},
  title = {Flow in fractured porous media: A review of conceptual models and discretization approaches},
  journal = {Transport in Porous Media},
  volume = {130},
  pages = {215--236},
  year = {2019},
  doi = {10.1007/s11242-018-1171-6}
}

@article{engquist2008asymptotic,
  author = {Engquist, Bj{\"o}rn and Souganidis, Panagiotis E.},
  title = {Asymptotic and numerical homogenization},
  journal = {Acta Numerica},
  volume = {17},
  pages = {147--190},
  year = {2008},
  doi = {10.1017/S0962492904000015}
}

@book{bensoussan1978asymptotic,
  author = {Bensoussan, Alain and Lions, Jacques-Louis and Papanicolaou, George},
  title = {Asymptotic Analysis for Periodic Structures},
  series = {Studies in Mathematics and its Applications},
  volume = {5},
  publisher = {North-Holland},
  address = {Amsterdam},
  year = {1978}
}

@article{allaire1992homogenization,
  author = {Allaire, Gr{\'e}goire},
  title = {Homogenization and two-scale convergence},
  journal = {SIAM Journal on Mathematical Analysis},
  volume = {23},
  number = {6},
  pages = {1482--1518},
  year = {1992},
  doi = {10.1137/0523084}
}

@article{barenblatt1960basic,
  author = {Barenblatt, G. I. and Zheltov, Iu. P. and Kochina, I. N.},
  title = {Basic concepts in the theory of seepage of homogeneous liquids in fissured rocks [strata]},
  journal = {Journal of Applied Mathematics and Mechanics},
  volume = {24},
  number = {5},
  pages = {1286--1303},
  year = {1960},
  doi = {10.1016/0021-8928(60)90107-6}
}

@article{arbogast1990derivation,
  author = {Arbogast, Todd and Douglas, Jr., Jim and Hornung, Ulrich},
  title = {Derivation of the double porosity model of single phase flow via homogenization theory},
  journal = {SIAM Journal on Mathematical Analysis},
  volume = {21},
  number = {4},
  pages = {823--836},
  year = {1990},
  doi = {10.1137/0521046}
}

@article{e2003heterogeneous,
  author = {E, Weinan and Engquist, Bj{\"o}rn},
  title = {The heterogeneous multiscale methods},
  journal = {Communications in Mathematical Sciences},
  volume = {1},
  number = {1},
  pages = {87--132},
  year = {2003},
  doi = {10.4310/CMS.2003.v1.n1.a8}
}

@article{abdulle2012heterogeneous,
  author = {Abdulle, Assyr and E, Weinan and Engquist, Bj{\"o}rn and Vanden-Eijnden, Eric},
  title = {The heterogeneous multiscale method},
  journal = {Acta Numerica},
  volume = {21},
  pages = {1--87},
  year = {2012},
  doi = {10.1017/S0962492912000025}
}

@article{efendiev2013generalized,
  author = {Efendiev, Yalchin and Galvis, Juan and Hou, Thomas Y.},
  title = {Generalized multiscale finite element methods ({GMsFEM})},
  journal = {Journal of Computational Physics},
  volume = {251},
  pages = {116--135},
  year = {2013},
  doi = {10.1016/j.jcp.2013.04.045}
}

@article{henning2014localized,
  author = {Henning, Patrick and M{\aa}lqvist, Axel},
  title = {Localized orthogonal decomposition techniques for boundary value problems},
  journal = {SIAM Journal on Scientific Computing},
  volume = {36},
  number = {4},
  pages = {A1609--A1634},
  year = {2014},
  doi = {10.1137/130933038}
}

@article{chung2024multicontinuum,
  author = {Chung, Eric and Efendiev, Yalchin and Galvis, Juan and Leung, Wing Tat},
  title = {Multicontinuum homogenization. {G}eneral theory and applications},
  journal = {Journal of Computational Physics},
  volume = {510},
  pages = {112980},
  year = {2024},
  doi = {10.1016/j.jcp.2024.112980}
}

@article{bunoiu2019upscaling,
  author = {Bunoiu, Renata and Timofte, Claudia},
  title = {Upscaling of a diffusion problem with interfacial flux jump leading to a modified {B}arenblatt model},
  journal = {Zeitschrift f{\"u}r Angewandte Mathematik und Mechanik},
  volume = {99},
  number = {2},
  pages = {e201800018},
  year = {2019},
  doi = {10.1002/zamm.201800018}
}

@article{chen1998finite,
  author = {Chen, Zhiming and Zou, Jun},
  title = {Finite element methods and their convergence for elliptic and parabolic interface problems},
  journal = {Numerische Mathematik},
  volume = {79},
  number = {2},
  pages = {175--202},
  year = {1998},
  doi = {10.1007/s002110050336}
}

@article{leveque1994immersed,
  author = {LeVeque, Randall J. and Li, Zhilin},
  title = {The immersed interface method for elliptic equations with discontinuous coefficients and singular sources},
  journal = {SIAM Journal on Numerical Analysis},
  volume = {31},
  number = {4},
  pages = {1019--1044},
  year = {1994},
  doi = {10.1137/0731054}
}

@article{chu2010multiscale,
  author = {Chu, C.-C. and Graham, I. G. and Hou, T.-Y.},
  title = {A new multiscale finite element method for high-contrast elliptic interface problems},
  journal = {Mathematics of Computation},
  volume = {79},
  number = {272},
  pages = {1915--1955},
  year = {2010},
  doi = {10.1090/S0025-5718-2010-02372-5}
}

@article{hou2018iteratively,
  author = {Hou, Thomas Y. and Hwang, Feng-Nan and Liu, Pengfei and Yao, Chien-Chou},
  title = {An iteratively adaptive multiscale finite element method for elliptic interface problems},
  journal = {Applied Numerical Mathematics},
  volume = {127},
  pages = {211--225},
  year = {2018},
  doi = {10.1016/j.apnum.2018.01.009}
}

@article{efendiev2011multiscale,
  author = {Efendiev, Yalchin and Galvis, Juan and Wu, Xiao-Hui},
  title = {Multiscale finite element methods for high-contrast problems using local spectral basis functions},
  journal = {Journal of Computational Physics},
  volume = {230},
  number = {4},
  pages = {937--955},
  year = {2011},
  doi = {10.1016/j.jcp.2010.09.026}
}
\end{document}